\begin{document}

\title[Fractional p-Choquad equations without (AR) condition]
{Existence of solutions for fractional $p-$Kirchhoff type equations with a generalized Choquard nonlinearities}

\author[W. Chen]{Wenjing Chen}
\address[W. Chen]{\noindent School of Mathematics and Statistics, Southwest University,
Chongqing 400715, People's Republic of China.}\email{wjchen@swu.edu.cn
}

 \maketitle

\maketitle
\numberwithin{equation}{section}
\newtheorem{theorem}{Theorem}[section]
\newtheorem{lemma}[theorem]{Lemma}
\newtheorem{definition}[theorem]{Definition}
\newtheorem{proposition}[theorem]{Proposition}
\newtheorem{remark}[theorem]{Remark}
\allowdisplaybreaks

\maketitle

\noindent {\bf Abstract}: In this article, we establish the existence of solutions to the fractional $p-$Kirchhoff type equations with a generalized Choquard nonlinearities without assuming the Ambrosetti-Rabinowitz condition.

\vspace{3mm} \noindent {\bf Keywords}: fractional p-Kirchhoff type equations; Choquard equation; without the (AR) condition.

\vspace{3mm} \noindent {\bf MR(2010) Subject Classification}:  	35J20, 35J60,  	47G20
\vspace{3mm}

\maketitle

\section{Introduction and statement of main result}

In this work,  we consider the following fractional $p-$Laplacian generalized Choquard equation
\begin{equation}\label{e1.1}
M(\|u\|^p_W)\Big[ (-\Delta)^s_pu + V(x)|u|^{p-2}u\Big]
= \lambda (\mathcal{I}_\mu*F(u))f(u),\quad \mbox{in}\ \ \mathbb{R}^N\\
\end{equation}
where $1<ps<N$, $M:\mathbb{R}^+_0\to\mathbb{R}^+$ is a  Kirchhoff
function,
\begin{equation}\label{e1.2}
\|u\|_{W}=\Big([u]_{s,p}^p+\int_{\mathbb{R}^N}V(x)|u|^pdx\Big)^{1/p}\quad \mbox{with}\ \ [u]_{s,p} =\Big(\iint_{\mathbb{R}^{2N}}\frac{|u(x)-u(y)|^p}{|x-y|^{N+ps}}\,dx\,dy
\Big)^{1/p},
\end{equation}
the potential function $V:\mathbb{R}^N\to\mathbb{R}^+$ is continuous, $f\in C(\mathbb{R},\mathbb{R})$ and $F\in C(\mathbb{R},\mathbb{R})$
with $F(u)=\int_0^uf(t)dt$,
here $\mathcal{I}_\mu(x)=|x|^{-\mu}$ is the Riesz potential of order $\mu\in(0,ps)$, and $(-\Delta )^s_p$ is the
fractional $p-$Laplacian operator which, up to a normalization constant, is
defined as
\begin{equation*}
(-\Delta)_p^s\varphi(x)=2 \lim_{\varepsilon\to
0^+}\int_{\mathbb{R}^N\setminus
B_\varepsilon(x)}\frac{|\varphi(x)-\varphi(y)|^{p-2}
(\varphi(x)-\varphi(y))}{|x-y|^{N+ps}}\,dy,\quad
x\in\mathbb{R}^{N},
\end{equation*}
along functions $\varphi\in C_0^\infty(\mathbb{R}^N)$, where
$B_\varepsilon(x)$ denotes the ball of $\mathbb{R}^N$ centered at
$x\in\mathbb{R}^N$ and radius $\varepsilon>0$.

On the one hand, this paper is motivated by some works that has been focused on the study of Kirchhoff type problems.
Fiscella and Valdinoci \cite{fiscellaValdinoci} first proposed a stationary fractional Kirchoff variational model as follows
\begin{eqnarray}\label{frac1cr2}
\qquad \left\{ \arraycolsep=1.5pt
   \begin{array}{ll}
 M\left(\iint_{\mathbb{R}^{2N}}\frac{|u(x)-u(y)|^2}{|x-y|^{N+2s}}dxdy\right) (-\Delta)^s u(x)=\lambda   f(x,u)+  |u|^{2^\ast-2}u \ \ \ &
{\rm in}\ \Omega,\\[2mm]
u=0 \ \ \quad & {\rm in}\ \mathbb{R}^N\backslash\Omega,
\end{array}
\right.
\end{eqnarray}
where $\Omega\subset\mathbb{R}^N$ is an open bounded set, $2^\ast=\frac{2N}{N-2s}$, $N>2s$ with $s\in(0,1)$. $M$ and $f$ are two continuous functions under some suitable assumptions. In \cite{fiscellaValdinoci}, the authors first provided a detail discussion about the physical meaning underlying the fractional Kirchhoff problems and their applications.
They supposed that $M:\mathbb{R}^+\to \mathbb{R}^+$ is an increasing and continuous function, and
there exists $m_0>0$ such that $M(t)\geq m_0= M(0)$ for all $t\in\mathbb{R}^+$.
Based on the truncated skill and the mountain pass theorem, they obtained the existence of a non-negative solution to problem (\ref{frac1cr2}) for any $\lambda>\lambda^\ast>0$, where $\lambda^\ast$ is an appropriate threshold.
Autuori {\em et al.} \cite{afp} established the existence and the asymptotic behavior of non-negative solutions to problem (\ref{frac1cr2}) under different assumptions on $M$,  the Kirchhoff function $M$ can be zero at zero, that is, the problem is degenerate case.

Moreover, there is a lot of literature concerning the existence and multiplicity of solutions for the fractional $p-$Laplacian Kirchhoff type problems.
Xiang {\em et al.} in \cite{XZF1} investigated the existence of solutions for Kirchhoff type problems involving the fractional $p-$Laplacian by variational methods, where the nonlinearity is subcritical and the Kirchhoff function is non-degenerate.
Combining the mountain pass theorem with Ekeland variational principle, Xiang {\em et al.} in \cite{XZF} established the existence of two solutions for a degenerate fractional $p-$Laplacian Kirchhoff equation in $\mathbb{R}^N$ with concave-convex nonlinearity.
By the same methods as in \cite{XZF}, Pucci {\em et al.} in \cite{PXZ} obtained the existence of two solutions for a nonhomogenous  Schr\"{o}dinger-Kirchhoff type equation involving the fractional $p-$Laplacian in $\mathbb{R}^N$ on a nondegenerate situation.
Furthermore, nonexistence and multiplicity of solutions for a nonhomogeneous fractional $p-$Kirchhoff type problem involving critical exponent in $\mathbb{R}^N$  were studied in \cite{XZZ}.
The existence of infinitely many solutions was proved in \cite{PXZ2,XMTZ} by using Krasnoselskii's genus theory under degenerate frameworks.
Recently, Song and Shi considered the existence of infinitely many solutions for degenerate $p-$fractional Kirchhoff equations with critical Sobolev-Hardy nonlinearities in \cite{SS1,SS2}.

On the other hand, there are some results about the Choquard equation,
consider the following Choquard or nonlinear
Schr\"odinger-Newton equation
\begin{align}\label{eq1.01}
-\Delta u+V(x)u=(\mathcal{I}_\mu*u^2)u+\lambda f(x,u)\quad\text{in
}\mathbb{R}^N,
\end{align}
which was elaborated by  Pekar \cite{Pekar} in the framework of
quantum mechanics.
The first investigation for the existence and symmetry of solutions to
\eqref{eq1.01} went back to the works of Lieb \cite{Lieb}. 
Equations of type \eqref{eq1.01} have
been extensively studied, see e.g. \cite{AFY1,MS1,MS2} and references therein.
Moroz and van Schaftingen in \cite{MS2} considered the existence of ground-states for a generalized Choquard equation.
The existence, multiplicity and concentration of solutions for a generalized quasilinear Choquard equation were studied by
Alves and Yang in \cite{AY,AY2}. We refer to \cite{MS4} for a good survey of the Choquard equation.

In the setting of the fractional Choquard equations,
\begin{align}\label{eq1.02}
(-\Delta)^s u+V(x) u=(\mathcal{I}_\mu*F(u))f(u) \quad\text{in }\mathbb{R}^N,
\end{align}
Wu  \cite{DW} investigated existence and stability of solutions
to (\ref{eq1.02}) with $f(u)=u$ and $\mu \in (N-2s, N)$.
Subsequently, D'Avenia and  Squassina in \cite{PDSS} studied the existence, regularity and asymptotic behavior of solutions to \eqref{eq1.02} with $f(u)=u^p$ and $V(x)\equiv const$. In particular, they claimed the nonexistence
of solutions as $q\in (\frac{2N-\mu}{N}, \frac{2N-\mu}{N-2s})$.
If $V(x)=1$ and $f$ satisfies Berestycki-Lions type assumptions, the existence of ground state solutions for  a fractional Choquard equation has been established in \cite{SGY}. Very recently, Ambrosio studied the concentration phenomena of solutions for a fractional Choquard equation with mangetic field in \cite{Amb}.

Recently, Belchior et al. in \cite{BBMP1} applied  the mountain pass theorem without PS condition and a characterization of
the infimum more suitable to the Nehari manifold naturally attached to the problem to
study the existence of ground state,
regularity and polynomial decay for the following fractional Choquard equation
\begin{align}\label{eqo1}
(-\Delta)^s_p u+A|u|^{p-2}u= (\mathcal{I}_\mu\ast F(u))f(u)\quad \mbox{in}\ \mathbb{R}^N,
\end{align}
where $A$ is a positive constant, $f$ is a $C^1$ positive function on $(0,\infty)$, $\lim_{t\to0}\frac{|f(t)|}{t^{p-1}}=0$,
$\lim_{t\to\infty}\frac{ f(t) }{t^{q-1}}=0$ for some $p<q<\frac{(2N-\mu)p}{2(N-ps)}$, and
\begin{align}\label{arb}
f'(t)t^2-(p-1)f(t)t>0\ \ \mbox{for\ all}\ t>0.
\end{align}
An example of a function $f$ satisfying these hypotheses is given by $f(t)=|t|^{q_1-1}t^++|t|^{q_2-1}t^+$, where $p<q_1<q_2<\frac{(N-\mu)p}{N-ps}$ and $t^+=\max\{t,0\}$. From (\ref{arb}), $f$ satisfies the
Ambrosetti-Rabinowitz condition ((AR) for short):
\begin{align}\label{ar}
p F(t)<tf(t)\ \ \mbox{for\ all}\ t>0,
\end{align}
and the function $\frac{f(t)}{t^{p-1}}$ is increasing.
It is well known that the $(AR)-$condition is quite natural and important not only to ensure that an Euler-Lagrangian functional has the mountain pass geometry structure, but also to ensure that the Palais-Smale sequence of the functional are bounded. However, there are many functions which are superlinear at infinity, but do not satisfy the $(AR)-$condition, for example, the function $f(t)=|t|^{p-2}t\log(1+|t|)$.
Thus, many researchers have tried to drop the $(AR)-$condition for elliptic equations involving the $p-$Laplacian, see \cite{fangliu,lkbp,LiYang,MS} and references therein.

In particular, Lee et al. in \cite{lkbp} considered the existence of nontrivial weak solutions for the quasilinear Choquard equation with
the nonlinearity $f$ does not satisfy the $(AR)-$condition.

Motivated by the above results, in the present paper, we are interested in the existence of solutions for the fractional $p-$Kirchhoff type equation (\ref{e1.1}) with a generalized Choquard nonlinearities without assuming the Ambrosetti-Rabinowitz condition.
We first give the following assumptions on the potential function $V$ and the Kirchhoff function $M$.

\begin{itemize}
\item[($V$)]$V:\mathbb{R}^N\to\mathbb{R}^+$
is a continuous function and there exists $V_0>0$ such that
$\inf_{\mathbb{R}^N} V \geq V_0$.
\end{itemize}

\begin{itemize}
\item[($M_1$)] $M:\mathbb{R}^+_0\to\mathbb{R}^+$
is a continuous function and there exists $m_0>0$ such that
$\inf_{t\geq 0} M(t) =m_0$.
\item[($M_2$)] There exists $\theta\in [1,\frac{2N-\mu}{N})$ 
such that
$$
M(t)t\leq \theta \mathscr{M}(t),\quad \forall\ t\geq 0,
$$
where $\mathscr{M}(t)=\int_0^t M(\tau)d\tau$.
\end{itemize}
A typical example is $M(t)=m_0+bt^{\theta-1}$, where $b\geq0$, $t\geq0$.

Moreover, we impose the following assumption on the nonlinearity
$f: \mathbb{R}\to\mathbb{R}$
that
\begin{itemize}
\item[($F_1$)]  $F\in C^1(\mathbb{R},\mathbb{R})$.

\item[($F_2$)]  There exist a constant $c_0>0$ and $p <q_1\leq q_2<\frac{(N-\mu)p}{N-ps}$ such that for all $t\in\mathbb{R}$,
    \[
    |f(t)|\leq c_0(|t|^{q_1-1}+|t|^{q_2-1}).
    \]

\item[($F_3$)]  $\lim\limits_{|u(x)|\to\infty}\frac{F(u(x))}{|u(x)|^{p\theta}}=\infty$ uniformly for $x\in\mathbb{R}^N$.

\item[($F_4$)]  There exist $c_1\geq0$, $r_0\geq0$ and $\kappa>\frac{N}{ps}$ such that
\[
|F(t)|^\kappa\leq c_1|t|^{\kappa p}\mathscr{F}(t)
\]
for all $t\in\mathbb{R}$ and $|t|\geq r_0$, where $\mathscr{F}(t)=\frac{1}{p\theta}f(t)t-\frac{1}{2}F(t)\geq0$.
\end{itemize}

The main result is as follows.

\begin{theorem}\label{thm1.1}
Let $0<\mu<ps<N$, and $(V)$, $(M_1)-(M_2)$ and $(F_1)-(F_4)$ hold. Then problem \eqref{e1.1} has a nontrivial weak solution for any $\lambda>0$.
\end{theorem}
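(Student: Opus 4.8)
The plan is to recast \eqref{e1.1} variationally on the Banach space $W=\{u:\|u\|_W<\infty\}$ and to obtain the solution as a critical point of the energy functional
\begin{equation*}
J_\lambda(u)=\frac1p\mathscr{M}\big(\|u\|_W^p\big)-\frac{\lambda}{2}\int_{\mathbb{R}^N}\big(\mathcal{I}_\mu*F(u)\big)F(u)\,dx .
\end{equation*}
First I would check that $J_\lambda\in C^1(W,\mathbb{R})$ and that its critical points are exactly the weak solutions of \eqref{e1.1}; here the growth bound $(F_2)$ together with the Hardy--Littlewood--Sobolev inequality and the continuous embedding $W\hookrightarrow L^{r}(\mathbb{R}^N)$ for $p\le r\le \frac{Np}{N-ps}$ (guaranteed by $(V)$) controls the Choquard term, since the upper exponent in $(F_2)$ keeps $q_i\frac{2N}{2N-\mu}$ strictly below the critical exponent $\frac{Np}{N-ps}$.

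Next I would establish the mountain pass geometry. Near the origin, $(M_1)$ gives $\frac1p\mathscr{M}(\|u\|_W^p)\ge \frac{m_0}{p}\|u\|_W^p$, while $(F_2)$, Hardy--Littlewood--Sobolev and Sobolev bound the nonlinearity by $C(\|u\|_W^{2q_1}+\|u\|_W^{2q_2})$ with $2q_1>p$; hence there are $\rho,\alpha>0$ with $J_\lambda(u)\ge\alpha$ on $\|u\|_W=\rho$. For the far point, fix $u_0\ne0$: by $(M_2)$ the Kirchhoff term grows at most like $t^{p\theta}$, whereas $(F_3)$ forces the Choquard term along $tu_0$ to grow faster than $t^{p\theta}$, so $J_\lambda(tu_0)\to-\infty$ and some $e=t_0u_0$ satisfies $J_\lambda(e)<0$. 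Because $(F_3)$ replaces the $(AR)$ condition, I cannot expect the Palais--Smale condition and would instead work with the \emph{Cerami} condition and the mountain pass theorem in its Cerami form, producing a Cerami sequence at the level $c=\inf_{\gamma}\max_t J_\lambda(\gamma(t))>0$.

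The core step is the boundedness of Cerami sequences without $(AR)$. The algebraic identity
\begin{equation*}
J_\lambda(u)-\frac{1}{p\theta}\langle J_\lambda'(u),u\rangle
=\Big(\frac1p\mathscr{M}(\|u\|_W^p)-\frac{1}{p\theta}M(\|u\|_W^p)\|u\|_W^p\Big)
+\lambda\int_{\mathbb{R}^N}\big(\mathcal{I}_\mu*F(u)\big)\mathscr{F}(u)\,dx
\end{equation*}
is nonnegative by $(M_2)$ and by $\mathscr{F}\ge0$, so for a Cerami sequence $\{u_n\}$ (where $(1+\|u_n\|_W)\|J_\lambda'(u_n)\|\to0$ yields $\langle J_\lambda'(u_n),u_n\rangle\to0$) the quantity $\int(\mathcal{I}_\mu*F(u_n))\mathscr{F}(u_n)\,dx$ stays bounded. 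Assuming $\|u_n\|_W\to\infty$ and setting $v_n=u_n/\|u_n\|_W$, I would run the Jeanjean dichotomy on the weak limit $v$: if $v\ne0$ then $|u_n|\to\infty$ on a set of positive measure and $(F_3)$ with Fatou makes the Choquard energy dominate the $O(\|u_n\|_W^{p\theta})$ Kirchhoff energy, forcing $J_\lambda(u_n)\to-\infty$, a contradiction; if $v=0$, I would use the maximum of $t\mapsto J_\lambda(tu_n)$ on $[0,1]$ together with $(F_4)$ (whose exponent $\kappa>\frac{N}{ps}$ makes the relevant H\"older exponents subcritical) to bound the rescaled Choquard energy by $\int(\mathcal{I}_\mu*F(u_n))\mathscr{F}(u_n)\,dx$, again contradicting $J_\lambda(t_nu_n)\to\infty$. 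This shows $\{u_n\}$ is bounded.

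Finally, from a bounded Cerami sequence $u_n\rightharpoonup u$ I would pass to the limit: the continuity of $M$ handles the Kirchhoff coefficient, the weak continuity of $(-\Delta)_p^s$ the leading operator, and a Brezis--Lieb argument the nonlocal term, so that $u$ is a critical point with $J_\lambda(u)\le c$; the value $c>0$ then excludes $u\equiv0$, giving a nontrivial weak solution for every $\lambda>0$. \textbf{I expect the compactness to be the main obstacle}: passing to the limit in $(\mathcal{I}_\mu*F(u_n))f(u_n)$, and also the $v=0$ case of the boundedness argument, both require strong convergence of $u_n$ in the subcritical spaces $L^{q_ir}(\mathbb{R}^N)$, which on the whole space under only $(V)$ is not automatic. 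The delicate point is therefore to secure the needed compact embedding (or to recover the missing mass through a concentration--compactness/Lions-type analysis of the Cerami sequence) so that the Choquard nonlinearity converges; once this compactness is in hand, the remaining passages to the limit are routine.
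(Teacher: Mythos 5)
Your overall strategy is the paper's own: the same functional $\mathcal{J}_\lambda$, the mountain pass theorem in Cerami form (the paper uses the Costa--Miyagaki version, Lemma \ref{cmainth}), boundedness of $(C)_c$-sequences via the identity $\mathcal{J}_\lambda(u_n)-\frac{1}{p\theta}\langle\mathcal{J}'_\lambda(u_n),u_n\rangle\ge\lambda\int_{\mathbb{R}^N}(\mathcal{I}_\mu\ast F(u_n))\mathscr{F}(u_n)\,dx$, and the dichotomy on the weak limit of $\omega_n=u_n/\|u_n\|_W$, with $(F_3)$ plus Fatou when $\omega\neq0$ and $(F_4)$ plus H\"older (where $\kappa>N/(ps)$ makes $\kappa p/(\kappa-1)$ subcritical) when $\omega=0$. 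Two local differences are worth noting. For unboundedness from below the paper does not use rays $tu_0$ but dilations $v_t(x)=v_0(x/t)$, under which the Choquard integral scales exactly as $t^{2N-\mu}$ while the Kirchhoff term is $O(t^{N\theta})$ for large $t$; this needs only $F(t_0)\neq0$ and the bound $\theta<\frac{2N-\mu}{N}$ from $(M_2)$, and it sidesteps a real defect of your ray argument: $F$ is not assumed nonnegative here, so Fatou cannot be applied directly to the signed product $F(tu_0(x))F(tu_0(y))$, and the cross terms between positive and negative parts of $F$ are not obviously dominated. Also, in the $\omega=0$ case the paper does not need Jeanjean's $\max_{t\in[0,1]}\mathcal{J}_\lambda(tu_n)$ device: it directly estimates the ratio $\int_{\mathbb{R}^N}(\mathcal{I}_\mu\ast F(u_n))F(u_n)\,dx\,/\,\mathscr{M}(\|u_n\|_W^p)$, split over $\{|u_n|\le r_0\}$ and $\{|u_n|\ge r_0\}$, against the lower bound (\ref{equ8}). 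Both devices are standard; the paper's is self-contained.

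The genuine gap is the one you flagged and then left open: compactness. Your proposal says the strong convergence needed to pass to the limit in $(\mathcal{I}_\mu\ast F(u_n))f(u_n)$ (and in the $\omega=0$ step) ``is not automatic'' and must be secured, but offers no mechanism for securing it; consequently neither the $(C)_c$ condition nor the boundedness argument is actually completed, and the mountain pass theorem cannot be invoked. The paper closes this step through Lemma \ref{lema2a}: it asserts that every weakly convergent sequence in $W^{s,p}_V(\mathbb{R}^N)$ has a subsequence converging strongly in $L^{q_1}(\mathbb{R}^N)\cap L^{q_2}(\mathbb{R}^N)$ with an $L^{q_i}$ dominating function, deduces that $\Psi$ and $\Psi'$ are weak-to-strong continuous, and then obtains strong $W$-convergence in Lemma \ref{mage2} via the Simon inequalities. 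You should be aware, however, that this assertion is precisely the compact embedding you questioned: under $(V)$ alone (for instance $V\equiv V_0$, which makes the problem translation invariant) the embedding $W^{s,p}_V(\mathbb{R}^N)\hookrightarrow L^{q_i}(\mathbb{R}^N)$ is not compact, and the paper gives no justification beyond citing the existence of a dominating function. So your diagnosis correctly locates the crux of the whole proof; but to turn your outline into a proof along the paper's lines you must either accept Lemma \ref{lema2a} as stated, or add structure that genuinely yields compactness (coercive $V$, a radial framework, or a concentration--compactness argument adapted to the Kirchhoff--Choquard coupling), none of which is carried out in your proposal.
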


The paper is organized as follows. In Section~\ref{sec preliminaries}, we give some definitions and preliminaries.
Section \ref{main} is devoted to prove Theorem \ref{thm1.1}, we obtain the existence of solution to problem \eqref{e1.1} by the mountain pass theorem.

\section{Preliminaries}\label{sec preliminaries}

We introduce some useful notations. The fractional Sobolev
space $W^{s,p}(\mathbb{R}^N)$ is defined by
\[
W^{s,p}(\mathbb{R}^N)=\left\{u\in L^p(\mathbb{R}^N)\ :\ [u]_{s,p}<\infty\right\},
\]
where $[u]_{s,p}$ denotes the Gagliardo norm defined by
\[
[u]_{s,p}=\left(\int\int_{\mathbb{R}^{2N}}\frac{|u(x)-u(y)|^p}{|x-y|^{N+ps}}dxdy\right)^{1/p},
\]
and $W^{s,p}(\mathbb{R}^N)$ is equipped with the norm
\[
\|u\|_{W^{s,p}(\mathbb{R}^N)}=\left(\|u\|_{p}^p+[u]_{s,p}^p\right)^{1/p},
\]
where and hereafter we denote by
$\|\cdot\|_q$ the norm of Lebesgue space $L^q(\mathbb{R}^N)$.
As it is well-known,  $W^{s,p}(\mathbb{R}^N)=(W^{s,p}(\mathbb{R}^N),\|u\|_{W^{s,p}(\mathbb{R}^N)})$ is a uniformly convex Banach
space.
Let $L^p(\mathbb{R}^N,V)$ denote the Lebesgue space of
real valued functions, with $V(x)|u|^p\in L^1({\mathbb{R}}^N),$
equipped with norm
$$
\|u\|_{p,V}=\Big(\int_{\mathbb{R}^N}V(x)|u|^p
\,dx\Big)^{1/p}\quad \text{for all }u\in L^p(\mathbb{R}^N,V).
$$
Let $W_V^{s,p}(\mathbb{R}^N)$ denote the completion of $C_0^{\infty}(\mathbb{R}^N)$, with respect to the norm
\[
\|u\|_{W}=\left([u]_{s,p}^p+\|u\|_{p,V}^p\right)^{1/p}.
\]
The embedding $W^{s,p}_V(\mathbb{R}^N)\hookrightarrow
L^{\nu}(\mathbb{R}^N)$ is continuous for any $\nu\in [p,\frac{Np}{N-ps}]$ by
\cite[Theorem 6.7]{r28}, namely there exists a positive constant
$C_\nu$ such that
\begin{align}\label{sobem}
\|u\|_{ \nu }\leq C_\nu \|u\|_W\quad\text{for all }
u\in W^{s,p}_V(\mathbb{R}^N).
\end{align}

Next, we recall the Hardy-Littlewood-Sobolev
inequality.
\begin{theorem} \cite[Theorem 4.3]{LL}
Assume that $1<r$, $t<\infty$, $0<\mu<N$ and
$$
\frac{1}{r}+\frac{1}{t}+\frac{\mu}{N}=2.
$$
Then there exists $C(N,\mu,r,t)>0$ such that
\begin{align*}
\iint_{\mathbb R^{2N}}\frac{|g(x)|\cdot|h(y)|}{|x-y|^\mu}\,dx\,dy\leq
C(N,\mu,r,t)\|g\|_r\|h\|_t
\end{align*}
for all $g\in L^r(\mathbb{R}^N)$ and $h\in L^t(\mathbb{R}^N)$.
\end{theorem}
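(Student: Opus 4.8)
This is the classical Hardy--Littlewood--Sobolev inequality, and I would prove the (non-sharp) version stated here by reducing it to a mapping property of the Riesz potential and then interpolating. Write $t'$ for the H\"older conjugate of $t$, so that $\tfrac1t+\tfrac1{t'}=1$; the hypothesis $\tfrac1r+\tfrac1t+\tfrac{\mu}{N}=2$ is then equivalent to $\tfrac1{t'}=\tfrac1r-\tfrac{N-\mu}{N}$, and since $r,t>1$ one checks $1<r<t'<\infty$. Introducing the Riesz potential
\[
I_\mu g(x)=\int_{\mathbb{R}^N}\frac{g(y)}{|x-y|^\mu}\,dy,
\]
H\"older's inequality gives
\[
\iint_{\mathbb{R}^{2N}}\frac{|g(x)|\,|h(y)|}{|x-y|^\mu}\,dx\,dy=\int_{\mathbb{R}^N}|h(y)|\,I_\mu(|g|)(y)\,dy\leq\|h\|_t\,\big\|I_\mu(|g|)\big\|_{t'}.
\]
Hence it suffices to establish the boundedness $\|I_\mu g\|_{t'}\leq C\|g\|_r$ for $g\geq0$, which by homogeneity I may test on $g$ normalized by $\|g\|_r=1$.

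The structural fact driving the proof is that the convolution kernel $k(x)=|x|^{-\mu}$ belongs to the weak Lebesgue space $L^{N/\mu,\infty}(\mathbb{R}^N)$: the superlevel set $\{x:|x|^{-\mu}>\sigma\}$ is the ball of radius $\sigma^{-1/\mu}$, so its measure equals $c_N\sigma^{-N/\mu}$ and $\sup_{\sigma>0}\sigma\,|\{k>\sigma\}|^{\mu/N}<\infty$. To obtain a weak-type estimate for $I_\mu=k*(\cdot)$ I would use the splitting method: given $\lambda>0$, decompose $k=k\,\chi_{\{k\leq\sigma\}}+k\,\chi_{\{k>\sigma\}}=:k_1+k_2$ at a threshold $\sigma=\sigma(\lambda)$ still to be chosen. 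Young's convolution inequality (with target exponent $\infty$) controls the bounded piece, $\|k_1*g\|_\infty\leq\|k_1\|_{r'}\,\|g\|_r$, and $\|k_1\|_{r'}$ is finite and is a power of $\sigma$ because $k\in L^{N/\mu,\infty}$ together with the relation above forces $r'>N/\mu$. Calibrating $\sigma$ so that this sup-bound equals $\lambda/2$ makes $\{I_\mu g>\lambda\}\subset\{k_2*g>\lambda/2\}$; Young's inequality with target exponent $r$ then gives $\|k_2*g\|_r\leq\|k_2\|_1\|g\|_r$ with $\|k_2\|_1$ a power of $\sigma$, and Chebyshev's inequality produces $|\{I_\mu g>\lambda\}|\leq C(\|g\|_r/\lambda)^{t'}$, i.e. the weak type $(r,t')$ bound.

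It remains to upgrade the weak-type bound to the strong-type bound $\|I_\mu g\|_{t'}\leq C\|g\|_r$. Since the exponent relation $\tfrac1q=\tfrac1p-\tfrac{N-\mu}{N}$ is affine in $\tfrac1p$, I would fix two exponents $p_0<r<p_1$ with companion targets $q_0,q_1$ obeying the same relation, prove the weak type $(p_i,q_i)$ estimate for each by the argument above, and invoke the Marcinkiewicz interpolation theorem: the sublinear operator $g\mapsto I_\mu(|g|)$ is then of strong type $(r,t')$, with a constant $C(N,\mu,r,t)$ depending on the chosen endpoints. Feeding this back into the H\"older reduction yields the claimed inequality.

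The main obstacle is the splitting step, where the threshold $\sigma(\lambda)$ must be tuned so that the $L^\infty$ control of $k_1*g$ and the $L^r$ (hence Chebyshev) control of $k_2*g$ combine to the single target exponent $t'$; this balancing is exactly where the arithmetic identity $\tfrac1r+\tfrac1t+\tfrac{\mu}{N}=2$ enters, through the computation $\mu-N/r'=N/t'$. An alternative route, which additionally yields the sharp constant, is the one adopted by Lieb and Loss: pass via the layer-cake representation and the Riesz rearrangement inequality to reduce to radially symmetric nonincreasing $g$ and $h$, and then close the estimate by a scaling argument; this is more delicate but gives more information than the interpolation proof sketched here.
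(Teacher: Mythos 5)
Your proof is correct, but note that the paper itself offers no argument here: the statement is imported verbatim from Lieb--Loss \cite[Theorem 4.3]{LL}, so the comparison is really with the proof in that source, and yours takes a genuinely different route. Your reductions all check out: with $1/t+1/t'=1$ the hypothesis is equivalent to $1/t'=1/r-(N-\mu)/N$, and $t>1$ forces $1/r>(N-\mu)/N$, so indeed $1<r<t'<\infty$ and H\"older reduces everything to $\|I_\mu g\|_{t'}\leq C\|g\|_r$. The splitting step is sound, since $r'>N/\mu$ (again equivalent to $1/r>(N-\mu)/N$) makes $\|k_1\|_{r'}$ finite with $\|k_1\|_{r'}\sim\sigma^{(\mu-N/r')/\mu}$, the balancing identity $\mu-N/r'=N/t'$ is exactly the hypothesis in disguise, and the Chebyshev bound lands on exponent $t'$ because $r\bigl(1+t'(N-\mu)/N\bigr)=t'$; moreover, since $r$ lies in the open interval $\bigl(1,N/(N-\mu)\bigr)$, you can choose endpoints $p_0<r<p_1$ inside it with $q_0\neq q_1$ and $p_i<q_i$, so the off-diagonal Marcinkiewicz theorem applies to the sublinear map $g\mapsto I_\mu(|g|)$. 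By contrast, Lieb and Loss argue via the layer-cake representation, estimating the triple integral over superlevel sets directly, and obtain the sharp constant in the conjugate case $r=t=2N/(2N-\mu)$ by the Riesz rearrangement inequality --- a harder proof that yields more. Your interpolation argument is more elementary and produces only an unspecified constant $C(N,\mu,r,t)$, but that is all the paper ever uses (e.g.\ in Lemma \ref{estf} and in the verification of the mountain-pass geometry), so nothing is lost for the purposes at hand; you also correctly flag this trade-off yourself in the closing remark.
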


In particular, $F(t)=|t|^{q_1}$ for some $q_1>0$, by the Hardy-Littlewood-Sobolev inequality, the
integral
\begin{align*}
\iint_{\mathbb{R}^{2N}}\frac{ F(u(x))F(u(y))}{|x-y|^\mu}\,dx\,dy
\end{align*}
is well defined if  $F\in L^t(\mathbb{R}^N)$ for some $t>1$
satisfying
\[
\frac{2}{t}+\frac{\mu}{N}=2, \quad\text{that is }t=\frac{2N}{2N-\mu}.
\]
Hence, by the fractional Sobolev embedding theorem, if
$u\in W_{V}^{s,p}(\mathbb{R}^N)$, we must require that $tq_1 \in[p,
\frac{Np}{N-ps}]$. Thus, for the subcritical case, we must assume
\begin{align*}
\tilde p_{\mu,s}=\frac{(N-\mu/2)p}{N}< q_1\leq q_2<
\frac{(N-\mu/2)p}{N-ps}=p_{\mu,s}^*.
\end{align*}
Hence, $\tilde p_{\mu,s}$ is called the lower critical exponent and
$p_{\mu,s}^*$ is said to be the upper critical exponent in the sense
of the Hardy-Littlewood-Sobolev inequality.

Equation \eqref{e1.1} has a variational structure and its associated energy functional   $\mathcal
J_\lambda: W_{V}^{s,p}(\mathbb{R}^N) \to\mathbb R$ is defined by
\[
\mathcal J_\lambda(u)
=\Phi(u)-  \lambda \Psi(u).
\]
with
\[
\Phi(u):=\frac{1}{p}\mathscr{M}(\|u\|_W^p),\quad \mbox{and}\ \ \
\Psi(u):=  \frac{1}{2}
\iint_{\mathbb{R}^{2N}}\frac{ F(u(x))F(u(y))}{|x-y|^\mu}\,dx\,dy.
\]
Under  the assumption ($F_2$),  $\mathcal
J_\lambda$ is of class $C^1(W_{V}^{s,p}(\mathbb{R}^N),\mathbb{R})$.
We say that $u\in W_V^{s,p}(\mathbb{R}^N)$ is a weak
solution of problem \eqref{e1.1}, if
\begin{gather*}
\begin{aligned}
 M(\|u\|_W^p)\Big[\langle u,\varphi\rangle_{s,p}+\int_{\mathbb{R}^N}V|u|^{p-2}u\varphi\,dx \Big]= \lambda\int_{\mathbb{R}^N} (\mathcal{I}_\mu*F(u))f(u)\varphi\,dx,
\end{aligned}
\end{gather*}
for all $\varphi\in W_V^{s,p}(\mathbb{R}^N)$, where
\[
\langle u,\varphi\rangle_{s,p}
=\iint_{\mathbb{R}^{2N}}\frac{\big[|u(x)-u(y)|^{p-2}(u(x)-u(y))\big]
\cdot\big[\varphi(x)-\varphi(y)\big]}{|x-y|^{N+ps}} \,dx\,dy.
\]
Clearly, the critical points of $\mathcal J_\lambda$ are exactly the
weak solutions of problem \eqref{e1.1}.

\begin{lemma}\cite[Lemma 2]{PXZ},\label{lema1}
Let $(V)$ and $(M_1)$ hold. Then $\Phi$ is of class $C^1(W^{s,p}_V(\mathbb{R}^N),\mathbb{R})$ and
\begin{align*}
\langle \Phi'(u),\varphi\rangle=&M(\|u\|_W^p)\Big[\iint_{\mathbb{R}^{2N}}\frac{|u(x)-u(y)|^{p-2}(u(x)-u(y))(\varphi(x)-\varphi(y))}{|x-y|^{N+ps}}dxdy\\
&\qquad\qquad+\int_{\mathbb{R}^N}V(x)|u(x)|^{p-2}u(x)\varphi(x)dx\Big],
\end{align*}
for all $u,\varphi\in W^{s,p}_V(\mathbb{R}^N)$. Moreover, $\Phi$ is weakly lower semi-continuous in $W^{s,p}_V(\mathbb{R}^N)$.
\end{lemma}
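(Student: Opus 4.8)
The plan is to write $\Phi=\tfrac1p\,\mathscr{M}\circ A$, where $A(u):=\|u\|_W^p=[u]_{s,p}^p+\|u\|_{p,V}^p$, and to deduce everything from the chain rule once I have shown that $A\in C^1(W^{s,p}_V(\mathbb{R}^N),\mathbb{R})$ with
\[
\langle A'(u),\varphi\rangle=p\Big[\langle u,\varphi\rangle_{s,p}+\int_{\mathbb{R}^N}V(x)|u|^{p-2}u\varphi\,dx\Big].
\]
Indeed, since $\mathscr{M}(t)=\int_0^tM(\tau)\,d\tau$ and $M$ is continuous by $(M_1)$, the scalar function $\mathscr{M}$ is $C^1$ on $\mathbb{R}^+_0$ with $\mathscr{M}'=M$, so $\langle\Phi'(u),\varphi\rangle=\tfrac1p M(\|u\|_W^p)\langle A'(u),\varphi\rangle$, which is exactly the stated formula. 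It therefore suffices to treat $A$, and its two summands can be handled independently by the same scheme.

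To obtain Gâteaux differentiability I would fix $u,\varphi$ and write the Gagliardo part as $\iint|U+t\Psi|^p\,d\mu$ with $U=u(x)-u(y)$, $\Psi=\varphi(x)-\varphi(y)$ and $d\mu=|x-y|^{-(N+ps)}dx\,dy$. The integrand is differentiable in $t$ with pointwise $t=0$ derivative $p|U|^{p-2}U\Psi$; by the mean value theorem the difference quotient is bounded, for $|t|\le1$, by $p(|U|+|\Psi|)^{p-1}|\Psi|$, and Hölder's inequality with exponents $p'=p/(p-1)$ and $p$ shows this majorant is $\mu$-integrable because $[u]_{s,p},[\varphi]_{s,p}<\infty$. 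Dominated convergence then lets me differentiate under the integral and yields $p\langle u,\varphi\rangle_{s,p}$. The weighted term $\int_{\mathbb{R}^N}V|u+t\varphi|^p\,dx$ is handled identically, the majorant $p\,V(|u|+|\varphi|)^{p-1}|\varphi|$ being integrable since $u,\varphi\in L^p(\mathbb{R}^N,V)$; a second Hölder estimate shows $\varphi\mapsto\langle A'(u),\varphi\rangle$ is a bounded linear functional, so $A'(u)\in(W^{s,p}_V(\mathbb{R}^N))^*$.

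The hard part is the continuity of $u\mapsto\Phi'(u)$. Suppose $u_n\to u$ in $W^{s,p}_V(\mathbb{R}^N)$. Continuity of the norm gives $\|u_n\|_W^p\to\|u\|_W^p$, whence $M(\|u_n\|_W^p)\to M(\|u\|_W^p)$ by continuity of $M$, so it remains to prove $A'(u_n)\to A'(u)$ in the dual. Testing against $\varphi$ with $\|\varphi\|_W\le1$ and using Hölder, I would bound the Gagliardo contribution by $\big(\iint\big||U_n|^{p-2}U_n-|U|^{p-2}U\big|^{p'}d\mu\big)^{1/p'}$, with $U_n=u_n(x)-u_n(y)$, and the weighted contribution by $\big(\int V\big||u_n|^{p-2}u_n-|u|^{p-2}u\big|^{p'}dx\big)^{1/p'}$. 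Now $u_n\to u$ in $W^{s,p}_V$ forces $U_n\to U$ in $L^p(\mu)$ and $u_n\to u$ in $L^p(\mathbb{R}^N,V)$, and the main obstacle is the continuity of the Nemytskii map $g\mapsto|g|^{p-2}g$ from $L^p$ into $L^{p'}$; since $\big||g|^{p-2}g\big|^{p'}=|g|^p$, this follows from a.e.\ convergence along subsequences combined with the generalized dominated convergence theorem. Both displayed quantities then tend to $0$, giving $\sup_{\|\varphi\|_W\le1}|\langle A'(u_n)-A'(u),\varphi\rangle|\to0$, so $\Phi'$ is continuous and $\Phi\in C^1$.

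Finally, for weak lower semicontinuity I would use that $W^{s,p}_V(\mathbb{R}^N)$ is uniformly convex, hence reflexive, so its norm is weakly sequentially lower semicontinuous: if $u_n\rightharpoonup u$ then $\|u\|_W^p\le\liminf_n\|u_n\|_W^p$. Writing $a_n=\|u_n\|_W^p$ and $a=\|u\|_W^p$, I would extract a subsequence along which simultaneously $\mathscr{M}(a_n)\to\liminf_n\mathscr{M}(a_n)$ and $a_n\to\ell\in[0,\infty]$; then $\ell\ge\liminf_n a_n\ge a$. Since $M>0$ by $(M_1)$, $\mathscr{M}$ is nondecreasing, and being continuous it gives $\liminf_n\mathscr{M}(a_n)=\mathscr{M}(\ell)\ge\mathscr{M}(a)$. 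Hence $\Phi(u)=\tfrac1p\mathscr{M}(a)\le\tfrac1p\liminf_n\mathscr{M}(a_n)=\liminf_n\Phi(u_n)$, which proves that $\Phi$ is weakly lower semicontinuous and completes the lemma.
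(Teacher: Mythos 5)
Your proof is correct, but note that the paper itself offers no proof of this lemma: it is imported verbatim by citation to \cite[Lemma~2]{PXZ}, so there is no internal argument to compare against, and what you have written is a self-contained reconstruction of the cited result along the standard lines. Your decomposition $\Phi=\tfrac1p\,\mathscr{M}\circ A$ with $A(u)=\|u\|_W^p$, the mean-value-theorem majorant $p(|U|+|\Psi|)^{p-1}|\Psi|$ for dominated convergence (valid for all $p>1$, since $t\mapsto|t|^p$ is $C^1$), and the reduction of the continuity of $A'$ to continuity of the Nemytskii map $g\mapsto|g|^{p-2}g$ from $L^p$ into $L^{p'}$ via $\big||g|^{p-2}g\big|^{p'}=|g|^p$ and an a.e.-subsequence/dominated-convergence argument are all sound; your bookkeeping of the weight, splitting $V=V^{1/p'}\cdot V^{1/p}$ so that the Nemytskii argument runs in the measure space $L^p(V\,dx)$, and of the Gagliardo kernel as the measure $d\mu=|x-y|^{-(N+ps)}dx\,dy$, is exactly right. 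Two cosmetic remarks: weak sequential lower semicontinuity of the norm needs only convexity plus strong continuity (equivalently, Hahn--Banach), so the appeal to uniform convexity and reflexivity is stronger than necessary; and in your subsequence argument the case $\ell=+\infty$ deserves one explicit line, namely that $(M_1)$ gives $\mathscr{M}(t)\geq m_0t\to\infty$, so $\liminf_n\mathscr{M}(a_n)=+\infty\geq\mathscr{M}(a)$ there as well --- your phrase $\liminf_n\mathscr{M}(a_n)=\mathscr{M}(\ell)$ covers this only under the continuous monotone extension of $\mathscr{M}$ to $[0,\infty]$. Neither point affects correctness; what the citation buys the paper is brevity, and what your version buys is a complete, checkable proof that does not depend on the external reference.
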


The next result is stated in \cite{AY}.

\begin{lemma}\label{estf}
Assume ($F_2$) holds, then there exists $K>0$ such that
\begin{align}\label{asd1}
|\mathcal{I}_\mu\ast F(v)|\leq K\quad \mbox{for}\ v\in W^{s,p}_V(\mathbb{R}^N).
\end{align}
\end{lemma}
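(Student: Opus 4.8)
The plan is to show that the Riesz potential $\mathcal I_\mu\ast F(v)$ is essentially bounded by estimating the convolution directly, splitting the singular kernel into its local and far-field parts. First I would record the pointwise consequence of $(F_2)$: integrating $|f(t)|\le c_0(|t|^{q_1-1}+|t|^{q_2-1})$ produces a constant $C>0$ with
\[
|F(t)|\le C\bigl(|t|^{q_1}+|t|^{q_2}\bigr)\qquad\text{for all }t\in\mathbb R.
\]
Consequently, for $v\in W^{s,p}_V(\mathbb R^N)$ and any exponent $r\ge1$ with $rq_1,rq_2\in[p,\tfrac{Np}{N-ps}]$, the continuous embedding \eqref{sobem} yields $F(v)\in L^r(\mathbb R^N)$ together with a bound on $\|F(v)\|_r$ in terms of $\|v\|_W$. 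Since $q_1\le q_2$, the admissible window is $r\in\bigl[\,p/q_1,\ Np/(q_2(N-ps))\,\bigr]$.

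The decisive observation is that this window straddles the threshold $N/(N-\mu)$ in its interior. Indeed, from $q_1>p>\tfrac{(N-\mu)p}{N}$ one gets $p/q_1<N/(N-\mu)$, while the subcriticality hypothesis $q_2<\tfrac{(N-\mu)p}{N-ps}$ in $(F_2)$ is precisely equivalent to $Np/(q_2(N-ps))>N/(N-\mu)$. Hence I may fix two exponents
\[
b<\frac{N}{N-\mu}<a,\qquad a,b\in\Bigl[\tfrac{p}{q_1},\ \tfrac{Np}{q_2(N-ps)}\Bigr],
\]
so that $F(v)\in L^a(\mathbb R^N)\cap L^b(\mathbb R^N)$ with both norms dominated by $\|v\|_W$.

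Next, for fixed $x$ I split the convolution according to the size of $|x-y|$:
\[
\bigl|(\mathcal I_\mu\ast F(v))(x)\bigr|\le\int_{|x-y|\le1}\frac{|F(v(y))|}{|x-y|^\mu}\,dy+\int_{|x-y|>1}\frac{|F(v(y))|}{|x-y|^\mu}\,dy.
\]
On the first integral I apply H\"older with exponent $a$; because $a>N/(N-\mu)$ the conjugate $a'$ satisfies $\mu a'<N$, so $\int_{|z|\le1}|z|^{-\mu a'}\,dz<\infty$ and the term is bounded by $C\|F(v)\|_a$. On the second integral I apply H\"older with exponent $b$; because $b<N/(N-\mu)$ the conjugate $b'$ satisfies $\mu b'>N$, so $\int_{|z|>1}|z|^{-\mu b'}\,dz<\infty$ and the term is bounded by $C\|F(v)\|_b$. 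Summing the two estimates produces a finite constant $K$, controlled by $\|v\|_W$, with $|\mathcal I_\mu\ast F(v)|\le K$ almost everywhere, which is the assertion \eqref{asd1}.

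I expect the only genuinely delicate point to be the verification that the exponent window $\bigl[\,p/q_1,\ Np/(q_2(N-ps))\,\bigr]$ really contains $N/(N-\mu)$ in its interior, since this is exactly where the two-sided growth restriction on $q_1,q_2$ in $(F_2)$ is used: the lower bound $q_1>p$ handles local integrability of the kernel, and the upper bound $q_2<\tfrac{(N-\mu)p}{N-ps}$ handles integrability at infinity. Once this is in place, the remaining H\"older splitting is routine.
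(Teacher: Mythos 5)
Your proof is correct, but note that the paper itself offers no argument for this lemma at all: it is stated as a quotation from \cite{AY}, so what you have done is reconstruct, in a self-contained way, the proof that the paper outsources. Your reconstruction is in fact the standard one for this type of result: split the Riesz potential at $|x-y|=1$, handle the singular local part by H\"{o}lder with an exponent $a>N/(N-\mu)$ (so that $\mu a'<N$), handle the tail by H\"{o}lder with $b<N/(N-\mu)$ (so that $\mu b'>N$), and observe that the growth bounds $p<q_1\leq q_2<\frac{(N-\mu)p}{N-ps}$ in $(F_2)$, combined with the Sobolev embedding \eqref{sobem}, are exactly what make the admissible window $\bigl[p/q_1,\,Np/(q_2(N-ps))\bigr]$ straddle $N/(N-\mu)$; your verification of that straddling is correct. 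Two small refinements you should make explicit. First, pick $b$ strictly greater than $1$ (possible since $p/q_1<1<N/(N-\mu)$), otherwise the conjugate $b'$ is not finite; alternatively, $b=1$ works if you simply bound the kernel by $1$ on $\{|x-y|>1\}$. Second, your constant $K$ depends on $\|v\|_W$, and you should say so: the lemma as literally stated (one $K$ valid for every $v\in W^{s,p}_V(\mathbb{R}^N)$) is false, as one sees by replacing $v$ by $tv$ and sending $t\to\infty$; the tenable statement, which is what your argument proves and what \cite{AY} actually asserts, is that $K$ can be chosen uniformly on bounded subsets of $W^{s,p}_V(\mathbb{R}^N)$. (The paper later invokes the lemma along a Cerami sequence that is assumed unbounded, where this distinction is material, but that is a defect of the paper's usage, not of your proof.)
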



\begin{lemma}\label{lema2a}
Let $(V)$ and $(F_1)-(F_2)$ hold. Then $\Psi$ and $\Psi'$ are weakly strongly continuous on $W^{s,p}_V(\mathbb{R}^N)$.
\end{lemma}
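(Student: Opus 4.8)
The plan is to prove both assertions from a single core estimate. Fix a sequence with $u_n\rightharpoonup u$ weakly in $W^{s,p}_V(\mathbb{R}^N)$; I must show $\Psi(u_n)\to\Psi(u)$ in $\mathbb{R}$ and $\Psi'(u_n)\to\Psi'(u)$ in $(W^{s,p}_V(\mathbb{R}^N))^*$. Set $t=\frac{2N}{2N-\mu}$. Using $(F_2)$, namely $|F(s)|\le C(|s|^{q_1}+|s|^{q_2})$, together with the fact that $tq_1$ and $tq_2$ lie strictly inside $[p,\frac{Np}{N-ps}]$ (a short computation using $\mu>0$ and $p<q_1\le q_2<\frac{(N-\mu)p}{N-ps}$ confirms this), the map $v\mapsto F(v)$ sends bounded sets of $W^{s,p}_V(\mathbb{R}^N)$ into bounded sets of $L^t(\mathbb{R}^N)$ via \eqref{sobem}. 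The entire lemma will follow once I establish the core convergence
\[
F(u_n)\to F(u)\ \text{in}\ L^t(\mathbb{R}^N),\qquad f(u_n)\to f(u)\ \text{in}\ L^{r}(\mathbb{R}^N),
\]
for the exponents $r$ fixed by H\"older duality and the Hardy--Littlewood--Sobolev (HLS) inequality.

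Granting this, $\Psi(u_n)\to\Psi(u)$ is immediate: writing $F(u_n)F(u_n)-F(u)F(u)=(F(u_n)-F(u))F(u_n)+F(u)(F(u_n)-F(u))$ and applying the bilinear HLS inequality factorwise, each term is bounded by $\|F(u_n)-F(u)\|_t$ times a bounded factor, which tends to $0$. For the derivative, I would use the identity $\langle\Psi'(v),\varphi\rangle=\int_{\mathbb{R}^N}(\mathcal I_\mu\ast F(v))\,f(v)\,\varphi\,dx$, the uniform bound $\|\mathcal I_\mu\ast F(u_n)\|_\infty\le K$ from Lemma \ref{estf}, and H\"older together with \eqref{sobem} to get
\[
\sup_{\|\varphi\|_W\le1}\big|\langle\Psi'(u_n)-\Psi'(u),\varphi\rangle\big|\le C\big\|(\mathcal I_\mu\ast F(u_n))f(u_n)-(\mathcal I_\mu\ast F(u))f(u)\big\|_{\nu'}.
\]
Splitting this difference as $(\mathcal I_\mu\ast F(u_n))(f(u_n)-f(u))+\big(\mathcal I_\mu\ast(F(u_n)-F(u))\big)f(u)$ and controlling the first summand by $K\|f(u_n)-f(u)\|_{\nu'}$ and the second by the linear HLS bound $\|\mathcal I_\mu\ast(F(u_n)-F(u))\|_\infty\lesssim\|F(u_n)-F(u)\|_t$ times $\|f(u)\|_{\nu'}$, the whole expression again reduces to the core convergence.

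The real work, and the main obstacle, is the core convergence itself. By \eqref{sobem} the sequence $(u_n)$ is bounded in every $L^\nu(\mathbb{R}^N)$, $\nu\in[p,\frac{Np}{N-ps}]$, so up to a subsequence $u_n\to u$ a.e. and, by the local compactness of the fractional Sobolev embedding, strongly in $L^\nu_{\mathrm{loc}}(\mathbb{R}^N)$. Since hypothesis $(V)$ supplies only $\inf V\ge V_0$ and \emph{not} coercivity, the embedding into $L^\nu(\mathbb{R}^N)$ is merely continuous, so the passage from local to global strong convergence of $F(u_n)$ in $L^t(\mathbb{R}^N)$ cannot be taken for granted and the behaviour at infinity must be handled directly. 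I would do this by a Vitali-type argument: a.e. convergence rules out oscillation, uniform integrability of $\{|F(u_n)|^t\}$ --- which follows from the strictly subcritical higher integrability $|F(u_n)|^t\le C(|u_n|^{tq_1}+|u_n|^{tq_2})$ --- rules out concentration, and a tightness estimate rules out mass escaping to infinity. The continuity of the Nemytskii maps $v\mapsto F(v),f(v)$ between the relevant Lebesgue spaces then upgrades a.e. convergence to the required $L^t$ and $L^r$ convergence. This tail/compactness step, which is exactly where the structure of the potential and the argument of \cite{AY} enter, is the crux; the HLS and H\"older bookkeeping around it is routine.
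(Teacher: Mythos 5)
Your reduction of both assertions to a single ``core convergence'' $F(u_n)\to F(u)$ in $L^{t}(\mathbb{R}^N)$, $t=\tfrac{2N}{2N-\mu}$ (plus the companion convergence of $f(u_n)$ in the dual exponent), with the Hardy--Littlewood--Sobolev and H\"older bookkeeping around it, is sound and in fact tidier than the paper's dominated-convergence argument. The problem is that your proposal stops exactly where the proof has to start: the core convergence is announced, not proved. Of your three Vitali ingredients, a.e.\ convergence and uniform integrability on sets of finite measure are indeed routine (boundedness of $|F(u_n)|^{t}$ in some $L^{r}$, $r>1$, follows from $tq_2<\tfrac{Np}{N-ps}$); the ``tightness estimate'' ruling out mass escaping to infinity is the entire content of the lemma, and under the stated hypotheses it cannot be established. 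Hypothesis $(V)$ asks only $\inf_{\mathbb{R}^N}V\geq V_0>0$; take $V\equiv V_0$, fix $u_0\in C_0^\infty(\mathbb{R}^N)$ with $\Psi(u_0)>0$ (such $u_0$ exists whenever $F\not\equiv 0$, by the density argument the paper itself uses in Lemma \ref{mage}), and set $u_n=u_0(\cdot-x_n)$ with $|x_n|\to\infty$. Then $u_n\rightharpoonup 0$ in $W^{s,p}_V(\mathbb{R}^N)$, while $\Psi(u_n)=\Psi(u_0)>0=\Psi(0)$ because both $\|\cdot\|_W$ and the double integral are translation invariant. So no tightness holds, $F(u_n)\not\to F(0)$ in $L^t(\mathbb{R}^N)$, and your crux step fails: closing it would require an extra structural assumption on $V$ (e.g.\ coercivity, which is what produces compactness in the settings of \cite{AY} that you invoke), not just cleverer analysis.

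For comparison, the paper's own proof buries the same difficulty in its first line: from $u_n\rightharpoonup u$ it asserts $u_n\to u$ strongly in $L^{q_1}(\mathbb{R}^N)\cap L^{q_2}(\mathbb{R}^N)$ together with an $L^{q_1}\cap L^{q_2}$ dominating function, i.e.\ it uses compactness of the embedding $W^{s,p}_V(\mathbb{R}^N)\hookrightarrow L^{q_i}(\mathbb{R}^N)$, which $(V)$ alone does not provide; everything afterwards is dominated convergence. So you have correctly diagnosed the soft spot that the paper glosses over --- your explicit remark that the embedding is merely continuous, not compact, is exactly right --- but your proposal does not repair it, and as a proof of the lemma as stated it has a genuine gap at its central step.
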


\begin{proof}
Let $\{u_n\}$ be a sequence in $W^{s,p}_V(\mathbb{R}^N)$ such that $u_n\rightharpoonup u$ in $W^{s,p}_V(\mathbb{R}^N)$ as $n\to\infty$. Then $\{u_n\}$ is bounded in $W^{s,p}_V(\mathbb{R}^N)$, and then there exists a subsequence denoted by itself, such that
$$
u_n\to u\quad\mbox{in}\ L^{q_1}(\mathbb{R}^N)\cap L^{q_2}(\mathbb{R}^N),\qquad \mbox{and}\ \ \
u_n\to u\quad\mbox{a.e. in}\ \mathbb{R}^N\ \ \mbox{as}\ n\to\infty,
$$
and by \cite[Theorem IV-9]{bre} there exists $\ell\in L^{q_1}(\mathbb{R}^N)\cap L^{q_2}(\mathbb{R}^N)$ such that
$$
|u_n(x)|\leq \ell(x)\ \mbox{a.e.\ in}\ \ \mathbb{R}^N.
$$
First, we show that $\Psi$ is weakly strongly continuous on $W^{s,p}_V(\mathbb{R}^N)$. Since $F\in C^1(\mathbb{R},\mathbb{R})$, we see that $F(u_n)\to F(u)$ as
$n\to\infty$ for almost all $x\in\mathbb{R}^N$, and so $(\mathcal{I}_\mu\ast F(u_n))F(u_n)\to (\mathcal{I}_\mu\ast F(u))F(u)$ as $n\to\infty$  for almost all $x\in\mathbb{R}^N$.
From Lemma \ref{estf} and $(F_2)$, we have
\begin{align*}
|(\mathcal{I}_\mu\ast F(u_n))F(u_n)|
\leq Kc_0 \Big(\frac{|u_n(x)|^{q_1}}{q_1}+\frac{|u_n(x)|^{q_2}}{q_2}\Big)\in L^1(\mathbb{R}^N).
\end{align*}
By Lebesgue dominated convergence theorem, we get
$$
\int_{\mathbb{R}^N}(\mathcal{I}_\mu\ast F(u_n))F(u_n)dx\to \int_{\mathbb{R}^N}(\mathcal{I}_\mu\ast F(u))F(u)dx\quad \mbox{as}\ n\to\infty,
$$
which implies that $\Psi(u_n)\to\Psi(u)$ as $n\to\infty$. Thus $\Psi$ is weakly strongly continuous on $W^{s,p}_V(\mathbb{R}^N)$.

We next prove that $\Psi'$ is weakly strongly continuous on $W^{s,p}_V(\mathbb{R}^N)$. Since $u_n(x)\to u(x)$ as $n\to\infty$ for almost all $x\in\mathbb{R}^N$,
$f(u_n)\to f(u)$ for almost all $x\in\mathbb{R}^N$ as $n\to\infty$. Then
$$
(\mathcal{I}_\mu\ast F(u_n))f(u_n) \to  (\mathcal{I}_\mu\ast F(u))f(u)\quad \mbox{a.e.\ in}\ \mathbb{R}^N,\ \  \mbox{as}\ n\to\infty.
$$
By $(F_2)$ and H\"{o}lder inequality, we have that for any $\varphi\in W^{s,p}_V(\mathbb{R}^N)$,
\begin{align*}
&\int_{\mathbb{R}^N}|(\mathcal{I}_\mu\ast F(u_n))f(u_n) \varphi(x)|dx\\
\leq&c_0K\int_{\mathbb{R}^N}| (|u_n|^{q_1-1}+|u_n|^{q_2-1}) \varphi(x)|dx\\
\leq&c_0K  \Big(\|u_n\|_{q_1}^{q_1-1}\|\varphi\|_{q_1}
+\|u_n\|_{q_2}^{q_2-1}\|\varphi\|_{q_2}\Big)   \\
\leq&c_0K  \Big(C_{q_1}\|\ell(x)\|_{q_1}^{q_1-1}
+C_{q_2}\|\ell(x)\|_{q_2}^{q_2-1}\Big)\|\varphi\|_W.
\end{align*}
Then by Lebesgue dominated convergence theorem, we obtain
\begin{align*}
&\|\Psi'(u_n)-\Psi'(u)\|_{\big(W_V^{s,p}(\mathbb{R}^N)\big)'}\\
=&\sup\limits_{\|\varphi\|_{W_V^{s,p}(\mathbb{R}^N)}=1}|\langle \Psi'(u_n)-\Psi'(u), \varphi\rangle|\\
=&\sup\limits_{\|\varphi\|_{W_V^{s,p}(\mathbb{R}^N)}=1}\int_{\mathbb{R}^N}|(\mathcal{I}_\mu\ast F(u_n))f(u_n) \varphi(x)-(\mathcal{I}_\mu\ast F(u))f(u) \varphi(x)|dx\\
\to& 0\quad \mbox{as}\ n\to\infty.
\end{align*}
Therefore, we get that $\Psi'(u_n)\to\Psi'(u)$ in $\big(W_V^{s,p}(\mathbb{R}^N)\big)'$ as $n\to\infty$. This completes the proof.
\end{proof}

\section{Proof of the main result}\label{main}

In this section, we will prove our main result. First, we introduce the following definition.

\begin{definition}\label{ccc}
For $c\in\mathbb{R}$, we say that $\mathcal{J}_\lambda$ satisfies the $(C)_c$ condition if for any sequence $\{u_n\}\subset W^{s,p}_V(\mathbb{R}^N)$ with
$$
\mathcal{J}_\lambda(u_n)\to c,\quad \|\mathcal{J}_\lambda'(u_n)\|(1+\|u_n\|_W)\to 0,
$$
there is a subsequence $\{u_n\}$ such that $\{u_n\}$ converges strongly in $W^{s,p}_V(\mathbb{R}^N)$.
\end{definition}

We will use the following  mountain pass theorem to prove  our result.
\begin{lemma}[Theorem 1 in \cite{CM}]\label{cmainth}
Let $E$ be a real Banach space, $I\in C^1(E,\mathbb{R})$
satisfies the $(C)_c$ condition for any $c\in\mathbb{R}$, and

(i) There are constants $\rho,\alpha>0$ such that $I|_{\partial B_\rho}\geq\alpha$.

(ii) There is an $e\in E\backslash B_\rho$ such that $I(e)\leq 0$.\\
\noindent Then,
$$
c=\inf\limits_{\gamma\in\Gamma}\max\limits_{0\leq t\leq1}I(\gamma(t))\geq \alpha
$$
is a critical value of $I$, where
$$
\Gamma=\{\gamma\in C([0,1],E):\gamma(0)=0,\gamma(1)=e\}.
$$
\end{lemma}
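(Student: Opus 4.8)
The plan is to run the classical minimax scheme, adapted to the Cerami compactness condition $(C)_c$. I would first prove the quantitative bound $c \ge \alpha$, and then show $c$ is a critical value by contradiction: assuming $c$ is a regular value, I construct a deformation that pushes a near-optimal path strictly below the level $c$, contradicting the definition of $c$ as an infimum.

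For the lower bound, fix any $\gamma \in \Gamma$. The map $t \mapsto \|\gamma(t)\|_E$ is continuous, vanishes at $t = 0$, and at $t = 1$ equals $\|e\|_E \ge \rho$ because $e \in E \setminus B_\rho$. By the intermediate value theorem there is $t_\ast$ with $\gamma(t_\ast) \in \partial B_\rho$, so assumption (i) gives $\max_{t \in [0,1]} I(\gamma(t)) \ge I(\gamma(t_\ast)) \ge \alpha$. Taking the infimum over $\Gamma$ yields $c \ge \alpha > 0$.

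The core of the proof is a deformation lemma tailored to $(C)_c$. Suppose $c$ is not a critical value. I first claim there are $\sigma, \varepsilon_0 > 0$ with $\|I'(u)\|_{E'}(1 + \|u\|_E) \ge \sigma$ on the band $A := I^{-1}([c - \varepsilon_0, c + \varepsilon_0])$; otherwise one extracts a sequence $u_n$ with $I(u_n) \to c$ and $\|I'(u_n)\|_{E'}(1 + \|u_n\|_E) \to 0$, i.e.\ a $(C)_c$ sequence, whose limit (which exists by $(C)_c$) is a critical point at level $c$ by continuity of $I$ and $I'$ --- a contradiction. On the regular set $\{I' \ne 0\} \supset A$ take a locally Lipschitz pseudo-gradient field $V$, choose $0 < \varepsilon < \varepsilon_0$ and a locally Lipschitz cut-off $\chi$ equal to $1$ on $I^{-1}([c - \varepsilon, c + \varepsilon])$ and to $0$ off $A$, and set $W(u) = -\chi(u)(1 + \|u\|_E)\,V(u)/\|V(u)\|$. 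The flow $\tfrac{d}{dt}\eta = W(\eta)$, $\eta(0, \cdot) = \mathrm{id}$, is globally defined because $\|W(u)\| \le 1 + \|u\|_E$ and Gronwall's inequality controls $\|\eta(t, u)\|_E$ over finite time; along it $\tfrac{d}{dt} I(\eta) \le -\tfrac12 \chi(\eta)(1 + \|\eta\|_E)\|I'(\eta)\|_{E'}$, which is $\le -\sigma/2$ on the inner band. Hence, writing $I^a := \{u \in E : I(u) \le a\}$, the time-$T$ map $\eta_1 := \eta(T, \cdot)$ with $T = 4\varepsilon/\sigma$ satisfies $\eta_1(I^{c+\varepsilon}) \subset I^{c-\varepsilon}$, and $\eta_1$ fixes every $u$ with $I(u) < c - \varepsilon_0$ (such points never enter $A$, since $I$ is non-increasing along the flow).

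Finally, choosing $\varepsilon_0 < c - \max\{I(0), I(e)\}$ --- legitimate since $I(e) \le 0 < \alpha \le c$ and, in the setting at hand, $I(0) = 0$ --- the map $\eta_1$ fixes the endpoints $0$ and $e$. Picking $\gamma \in \Gamma$ with $\max_t I(\gamma(t)) \le c + \varepsilon$ and setting $\beta = \eta_1 \circ \gamma$, we get $\beta \in \Gamma$ with $\max_t I(\beta(t)) \le c - \varepsilon$, forcing $c \le c - \varepsilon$, which is absurd; hence $c$ is a critical value. The main obstacle is precisely building this descending flow under $(C)_c$ rather than Palais--Smale: because $I'$ may degenerate as $\|u\|_E \to \infty$, the weight $(1 + \|u\|_E)$ is what simultaneously turns the Cerami bound into a uniform descent rate and keeps the flow globally defined; checking the local Lipschitz regularity of $V$, $\chi$ and $W$ is routine and I would treat it as standard.
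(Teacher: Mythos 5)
The paper never proves this lemma: it is quoted verbatim as Theorem 1 of Costa--Miyagaki \cite{CM}, so there is no in-paper argument to compare against, and your task was effectively to reconstruct the cited proof. Your reconstruction is correct and is in substance the standard route (the Cerami-type deformation going back to Bartolo--Benci--Fortunato, which is also what underlies \cite{CM}): the intermediate-value crossing of $\partial B_\rho$ gives $c\geq\alpha$; negating ``$c$ is critical'' together with the $(C)_c$ condition yields the uniform bound $\|I'(u)\|_{E'}(1+\|u\|_E)\geq\sigma$ on a band around level $c$; and the weight $(1+\|u\|_E)$ in the truncated, normalized pseudo-gradient field is exactly the right choice, since it simultaneously turns the Cerami bound into the uniform descent rate $\sigma/2$ on the inner band and keeps $\|W(u)\|\leq 1+\|u\|_E$, which guarantees global solvability of the flow. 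Your bookkeeping is also right: with $T=4\varepsilon/\sigma$ one has $T\sigma/2=2\varepsilon$, and the usual dichotomy (the trajectory either exits below $c-\varepsilon$, after which monotonicity of $I$ along the flow keeps it there, or remains in the inner band for all of $[0,T]$ and descends by $2\varepsilon$) gives $\eta_1(I^{c+\varepsilon})\subset I^{c-\varepsilon}$.

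The one genuine point of care --- which you correctly flagged rather than glossed over --- is the endpoint $\gamma(0)=0$: the lemma as transcribed in the paper states no hypothesis on $I(0)$, yet your deformation argument (any deformation argument) needs $\max\{I(0),I(e)\}<c$ to choose $\varepsilon_0$ so that both endpoints lie outside the band and are fixed by $\eta_1$. This condition is present in the original statement in \cite{CM} (where $I(0)=0$ is assumed) and is harmless in the application here, since $\mathcal{J}_\lambda(0)=\frac{1}{p}\mathscr{M}(0)-\lambda\Psi(0)=0<\alpha\leq c$; so the gap is inherited from the paper's transcription, not created by your proof. With that hypothesis restored, your argument is complete; the remaining ingredients you treated as standard (existence of a locally Lipschitz pseudo-gradient field on the regular set, a locally Lipschitz cut-off built from distance functions to the disjoint closed sets $I^{-1}([c-\varepsilon,c+\varepsilon])$ and $E\setminus I^{-1}((c-\varepsilon_0,c+\varepsilon_0))$, and local Lipschitzness of $W$, which holds because the closed band $A$ lies inside the open regular set and $\|V\|\geq\|I'\|>0$ there) are indeed routine.
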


We first show that the energy functional $\mathcal{J}_\lambda$ satisfies the geometric structure.

\begin{lemma}\label{mage}
Assume that $(V)$, $(M_1)-(M_2)$ and $(F_1)-(F_3)$ hold. Then

(i) There exists $\alpha,\rho>0$ such that $\mathcal{J}_\lambda(u)\geq\alpha$ for all $u\in W^{s,p}_V(\mathbb{R}^N)$ with $\|u\|_W=\rho$.

(ii) $\mathcal{J}_\lambda(u)$ is unbounded from below on $W^{s,p}_V(\mathbb{R}^N)$.
\end{lemma}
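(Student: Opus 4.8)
The plan is to verify the two mountain--pass geometry conditions separately, using the uniform lower bound $(M_1)$ on $M$ for (i) and the growth restrictions $(M_2)$, $(F_3)$ for (ii).

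For (i), I would first bound $\Phi$ from below. Since $M(\tau)\ge m_0$ for every $\tau\ge0$ by $(M_1)$, integration gives $\mathscr M(\tau)\ge m_0\tau$, hence $\Phi(u)=\frac1p\mathscr M(\|u\|_W^p)\ge\frac{m_0}{p}\|u\|_W^p$. Next I would bound $\Psi$ from above: applying the Hardy--Littlewood--Sobolev inequality with $r=t=\frac{2N}{2N-\mu}$ yields $\Psi(u)\le C\|F(u)\|_{\frac{2N}{2N-\mu}}^2$, while integrating $(F_2)$ gives $|F(u)|\le C(|u|^{q_1}+|u|^{q_2})$. Because $p<q_1\le q_2<\frac{(N-\mu)p}{N-ps}$, a direct check shows $\frac{2N}{2N-\mu}q_i\in[p,\frac{Np}{N-ps}]$ for $i=1,2$, so the continuous embedding \eqref{sobem} applies to each term and produces $\Psi(u)\le C(\|u\|_W^{2q_1}+\|u\|_W^{2q_2})$. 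Consequently, for $\|u\|_W=\rho$,
\[
\mathcal J_\lambda(u)\ge\frac{m_0}{p}\rho^p-\lambda C\left(\rho^{2q_1}+\rho^{2q_2}\right)=\rho^p\left(\frac{m_0}{p}-\lambda C\left(\rho^{2q_1-p}+\rho^{2q_2-p}\right)\right).
\]
As $q_1>p$ forces $2q_1-p>p>0$, the bracket is positive once $\rho$ is small, which yields the constants $\alpha,\rho>0$.

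For (ii), I would fix $u_0\in W^{s,p}_V(\mathbb R^N)$ with $u_0\ge0$, $u_0\not\equiv0$, and study $\mathcal J_\lambda(tu_0)$ as $t\to+\infty$. Writing $(M_2)$ as $\frac{\mathscr M'(\tau)}{\mathscr M(\tau)}\le\frac{\theta}{\tau}$ (valid since $\mathscr M(\tau)\ge m_0\tau>0$) and integrating gives $\mathscr M(\tau)\le C\tau^\theta$ for $\tau\ge1$, whence $\Phi(tu_0)=\frac1p\mathscr M\big(t^p\|u_0\|_W^p\big)\le C t^{p\theta}$ for large $t$. For the Choquard term, $(F_3)$ gives $F(tu_0(x))/t^{p\theta}\to+\infty$ for a.e.\ $x$ in the positive--measure set $\{u_0>0\}$, and $F(0)=0$; restricting to a subset where $u_0\ge\delta>0$ so that $F(tu_0)\ge0$ for all large $t$, Fatou's lemma gives
\[
\liminf_{t\to\infty}\frac{\Psi(tu_0)}{t^{2p\theta}}=+\infty,
\]
and since $2p\theta>p\theta$ this forces $\Psi(tu_0)/t^{p\theta}\to+\infty$. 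Hence $\mathcal J_\lambda(tu_0)\le C t^{p\theta}-\lambda\Psi(tu_0)\to-\infty$, so $\mathcal J_\lambda$ is unbounded below and, choosing $e=t_0u_0$ with $t_0$ large, $\|e\|_W>\rho$ and $\mathcal J_\lambda(e)\le0$.

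The main obstacle I anticipate lies in (ii): matching the growth rate $t^{p\theta}$ of $\Phi$ obtained from $(M_2)$ against the faster growth of $\Psi$ coming from $(F_3)$, and making the Fatou step rigorous. The delicate point is controlling the sign of $F(tu_0)$ off the set $\{u_0\ge\delta\}$ -- there $F$ need not be nonnegative -- which I would handle using the global lower bound on the continuous function $F$ (finite on compact sets, $\to+\infty$ at infinity by $(F_3)$) together with the positive definiteness of the Riesz kernel $|x-y|^{-\mu}$, guaranteeing $\Psi\ge0$ and allowing the $\{u_0\ge\delta\}$ contribution to dominate. Part (i) is comparatively routine, the only care being the verification that the Hardy--Littlewood--Sobolev exponents $\frac{2N}{2N-\mu}q_i$ indeed fall in the admissible Sobolev range.
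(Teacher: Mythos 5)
Your proposal is correct in substance but takes a genuinely different route from the paper, most visibly in part (ii). For (i) the difference is minor: the paper bounds the Choquard term via the uniform estimate $|\mathcal{I}_\mu\ast F(u)|\le K$ of Lemma \ref{estf} and then integrates $|F(u)|$ with the embedding \eqref{sobem}, producing exponents $q_1,q_2>p$, whereas you apply Hardy--Littlewood--Sobolev directly and get exponents $2q_1,2q_2>p$; both close the argument, and your version has the advantage of not leaning on the (delicately stated) uniform constant $K$, and of checking explicitly that $\tfrac{2N}{2N-\mu}q_i$ lies in the admissible Sobolev range, which indeed follows from $q_2<\tfrac{(N-\mu)p}{N-ps}\le\tfrac{(2N-\mu)p}{2(N-ps)}$. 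For (ii) the paper does not use the ray $t\mapsto tu_0$ at all: it picks $v_0$ with $\iint_{\mathbb{R}^{2N}} F(v_0(x))F(v_0(y))|x-y|^{-\mu}\,dx\,dy>0$ (possible once $F(t_0)\neq 0$ for some $t_0$, by testing with $t_0\chi_{B_1}$ and density) and then \emph{dilates}, $v_t(x)=v_0(x/t)$. Under dilation the Choquard integral scales exactly like $t^{2N-\mu}$ times a fixed positive constant, while by \eqref{equ0} the Kirchhoff term grows at most like $t^{N\theta}$, and the restriction $\theta<\tfrac{2N-\mu}{N}$ in $(M_2)$ finishes the proof; the sign of $F$ and the superlinearity in $(F_3)$ play essentially no role, so all the delicate points you list simply do not arise. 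Your amplitude-scaling argument instead uses the full strength of $(F_3)$ and must confront the sign of $F$; the repair you sketch (global lower bound for $F$ on $[0,\infty)$ from continuity plus $(F_3)$, positive-definiteness of the Riesz kernel to discard the square term over $\{u_0<\delta\}$, Fatou on the diagonal block) does work, but be aware that on $\{0<u_0<\delta\}$ the values $tu_0$ are \emph{unbounded} as $t\to\infty$, so a bound of $F$ on compact sets is not enough --- the global lower bound is genuinely needed --- and you should take $u_0$ compactly supported so that the kernel integrals controlling the cross term are finite. What each approach buys: the paper's dilation is cleaner on signs, but it implicitly requires $t^N\int_{\mathbb{R}^N}V(tx)|v_0|^p\,dx$ to grow no faster than a power $t^N$, which is not justified under $(V)$ alone (only $\inf V\ge V_0$ is assumed, so $V$ may be unbounded above); your ray $tu_0$ scales the norm exactly, $\|tu_0\|_W=t\|u_0\|_W$, and is therefore robust for arbitrary potentials satisfying $(V)$, at the price of the heavier sign analysis.
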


\begin{proof}
$(i)$ From Lemma \ref{estf} and $(M_1)-(M_2)$, ($F_2$), we have
\begin{align*}
\mathcal J_\lambda(u)
=&\frac{1}{p}\mathscr{M}(\|u\|_W^p)-  \frac{\lambda }{2}\iint_{\mathbb{R}^{2N}}\frac{ F(u(x))F(u(y))}{|x-y|^\mu}\,dx\,dy\\
\geq &\frac{1}{p\theta}M(\|u\|_W^p)\|u\|_W^p-  \frac{\lambda c_0K }{2}\int_{\mathbb{R}^{N}}\Big(\frac{|u|^{q_1}}{q_1}+\frac{|u|^{q_2}}{q_2}\Big)\,dx\\
\geq &\left[\frac{m_0}{p\theta} -  \frac{\lambda c_0K }{2} \Big(C_{q_1}^{q_1}\|u\|_W^{q_1-p}+C_{q_2}^{q_2}\|u\|_W^{q_2-p}\Big)\right]\|u\|_W^p.
\end{align*}
Since $q_2\geq q_1>p$, the claim follows if we choose $\rho$ small enough.

$(ii)$ From $(M_2)$, we have
\begin{align}\label{equ0}
\mathscr{M}(t)\leq\mathscr{M}(1)t^\theta\quad \mbox{for\ all}\  t\geq 1.
\end{align}
By the assumption $(F_3)$, we can take that $t_0$ such that $F(t_0)\neq 0$, we find
$$
\int_{\mathbb{R}^N}(\mathcal{I}_\mu\ast F(t_0\chi_{B_1}))F(t_0\chi_{B_1})dx=F(t_0)^2\int_{B_1}\int_{B_1}\mathcal{I}_\mu(x-y)dxdy>0,
$$
where $B_r$ denotes the open ball centered at the origin with radius $r$ and $\chi_{B_1}$ denotes the standard indicator function of set $B_1$. By the density theorem, there will be
$v_0\in W^{s,p}_V(\mathbb{R}^N)$ with
$$
\int_{\mathbb{R}^N}(\mathcal{I}_\mu\ast F(v_0))F(v_0)dx>0.
$$
Define the function $v_t(x)=v_0(\frac{x}{t})$, then
\begin{align*}
\mathcal J_\lambda(v_t)
=&\frac{1}{p}\mathscr{M}(\|v_t\|_W^p)-  \frac{\lambda }{2}\iint_{\mathbb{R}^{2N}}\frac{ F(v_t(x))F(v_t(y))}{|x-y|^\mu}\,dx\,dy\\
\leq &\frac{1}{p}\mathscr{M}(1)\|v_t\|_W^{p\theta}  -  \frac{\lambda }{2}\iint_{\mathbb{R}^{2N}}\frac{ F(v_t(x))F(v_t(y))}{|x-y|^\mu}\,dx\,dy\\
=&\frac{1}{p}\mathscr{M}(1)\left[t^{N-ps}\|v_0\|_W^{p}+t^N\int_{\mathbb{R}^N}V(tx)|v_0|^p dx\right]^\theta
-  t^{2N-\mu}\frac{\lambda }{2}\iint_{\mathbb{R}^{2N}}\frac{ F(v_0(x))F(v_0(y))}{|x-y|^\mu}\,dx\,dy,
\end{align*}
for sufficiently large $t$. Therefore, we have that $\mathcal J_\lambda(v_t)\to-\infty$ as $t\to\infty$ since $1\leq \theta<\frac{2N-\mu}{N}$ gives that $2N-\mu>N\theta>(N-ps)\theta$. Hence we obtain that
the functional $\mathcal{J}_\lambda$ is unbounded from below.
\end{proof}

\begin{lemma}\label{mage1}
Assume that $(V)$, $(M_1)-(M_2)$ and $(F_1)-(F_4)$ hold. Then $(C)_c-$sequence of $\mathcal{J}_\lambda$ is bounded for any $\lambda>0$.
\end{lemma}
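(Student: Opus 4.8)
The plan is to argue by contradiction. Let $\{u_n\}$ be a $(C)_c$ sequence, so $\mathcal J_\lambda(u_n)\to c$ and $\langle\mathcal J_\lambda'(u_n),u_n\rangle\to0$, and suppose along a subsequence that $\|u_n\|_W\to\infty$. The first step is to extract coercivity from $(M_2)$: testing with $u_n$, using Lemma~\ref{lema1} together with $\langle\Psi'(u),u\rangle=\int_{\mathbb R^N}(\mathcal I_\mu*F(u))f(u)u\,dx$,
\begin{align*}
c+o(1)&=\mathcal J_\lambda(u_n)-\tfrac1{p\theta}\langle\mathcal J_\lambda'(u_n),u_n\rangle\\
&=\Big(\tfrac1p\mathscr M(\|u_n\|_W^p)-\tfrac1{p\theta}M(\|u_n\|_W^p)\|u_n\|_W^p\Big)+\lambda\int_{\mathbb R^N}\big(\mathcal I_\mu*F(u_n)\big)\mathscr F(u_n)\,dx.
\end{align*}
By $(M_2)$ the bracketed term is nonnegative, so $\int_{\mathbb R^N}(\mathcal I_\mu*F(u_n))\mathscr F(u_n)\,dx$ stays bounded; this is the Choquard substitute for the familiar bound on $\int\mathscr F(u_n)$ in non-(AR) arguments. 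In parallel, since $\mathcal J_\lambda(u_n)\to c$ and $\Phi(u_n)=\tfrac1p\mathscr M(\|u_n\|_W^p)\le\tfrac{\mathscr M(1)}{p}\|u_n\|_W^{p\theta}$ by \eqref{equ0}, one gets the a priori upper bound $\Psi(u_n)\le C\|u_n\|_W^{p\theta}$, while $(M_1)$ gives $\Phi(u_n)\ge\tfrac{m_0}{p}\|u_n\|_W^p$.

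Next I would set $w_n=u_n/\|u_n\|_W$, so $\|w_n\|_W=1$, and pass to a subsequence with $w_n\rightharpoonup w$ in $W^{s,p}_V(\mathbb R^N)$ and $w_n\to w$ a.e., splitting into the cases $w\ne0$ and $w=0$. If $w\ne0$, then on the positive-measure set $\{w\ne0\}$ one has $|u_n|\to\infty$, and $(F_3)$ forces $F(u_n)\to\infty$ there. Restricting the double integral defining $\Psi$ to a fixed ball intersected with (a slightly shrunk version of) this set, using $F\ge0$ so that such a restriction lowers the value, and bounding $|x-y|^{-\mu}$ below, I would obtain $\Psi(u_n)\ge C_L\|u_n\|_W^{2p\theta}$ with $C_L$ arbitrarily large, whence $\Psi(u_n)/\|u_n\|_W^{p\theta}\to\infty$. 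This contradicts the upper bound of the first step. (The same blow-up makes $\mathcal I_\mu*F(u_n)$ unbounded on a ball, which also contradicts Lemma~\ref{estf}.)

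The remaining case $w=0$ is the heart of the matter and the main obstacle. Here I would invoke $(F_4)$, which for $|t|\ge r_0$ reads $|F(t)|\le(c_1\mathscr F(t))^{1/\kappa}|t|^{p}$, and estimate the nonlinear quantities over $\{|u_n|\ge r_0\}$ by Hölder with exponents $\kappa$ and $\kappa'=\kappa/(\kappa-1)$, controlling them by $\big(\int\mathscr F(u_n)\big)^{1/\kappa}\big(\int|w_n|^{p\kappa'}\big)^{1/\kappa'}\|u_n\|_W^{p}$; the region $\{|u_n|<r_0\}$ is absorbed using $(F_2)$ and $q_1>p$. The hypothesis $\kappa>N/(ps)$ enters exactly because it is equivalent to $p\kappa'<\tfrac{Np}{N-ps}$, keeping $p\kappa'$ subcritical so that $\int|w_n|^{p\kappa'}$ is controlled by $\|w_n\|_W^{p\kappa'}=1$. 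Inserting this into $\langle\mathcal J_\lambda'(u_n),u_n\rangle=o(1)$ and comparing with $M(\|u_n\|_W^p)\|u_n\|_W^p\ge m_0\|u_n\|_W^p$ would show the nonlinear part cannot keep up with the leading term, the desired contradiction. The two delicate points I expect to fight with are: disentangling the nonlocal weight, since the natural a priori bound controls $\int(\mathcal I_\mu*F(u_n))\mathscr F(u_n)$ rather than $\int\mathscr F(u_n)$ itself (here Lemma~\ref{estf} and the Hardy--Littlewood--Sobolev inequality are the tools I would try); and upgrading $w_n\rightharpoonup0$ to the strong convergence $w_n\to0$ in $L^{p\kappa'}(\mathbb R^N)$ that the vanishing step ultimately needs. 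Once both cases are ruled out, $\{u_n\}$ is bounded.
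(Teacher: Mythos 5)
Your overall strategy is the paper's: argue by contradiction, normalize $w_n=u_n/\|u_n\|_W$, use $(M_2)$ and the $(C)_c$-condition to get the a priori bound $\lambda\int_{\mathbb{R}^N}(\mathcal I_\mu*F(u_n))\mathscr F(u_n)\,dx\le c+1$ (the paper's \eqref{equ7}), rule out $w\ne 0$ via the blow-up forced by $(F_3)$, and rule out $w=0$ via $(F_4)$ and H\"older with exponents $\kappa,\kappa'$, with $\kappa>N/(ps)$ keeping $p\kappa'$ subcritical. However, the decisive step of the vanishing case, as you wrote it, does not close — and you flagged the problem yourself without resolving it. Your H\"older estimate pulls the Riesz potential out via the $L^\infty$ bound of Lemma \ref{estf} and produces the factor $\big(\int\mathscr F(u_n)\,dx\big)^{1/\kappa}$, but the only quantity bounded a priori is the \emph{weighted} integral $\int(\mathcal I_\mu*F(u_n))\mathscr F(u_n)\,dx$. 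Neither of the tools you name can convert the weighted bound into an unweighted one: Lemma \ref{estf} and Hardy--Littlewood--Sobolev give only \emph{upper} bounds on $\mathcal I_\mu*F(u_n)$, and no positive lower bound is available (the potential may be small exactly where $\mathscr F(u_n)$ is large). The missing idea, which is how the paper closes this step in \eqref{equ8b}, is to never separate the weight: estimate $\big(|\mathcal I_\mu*F(u_n)|\,|F(u_n)|/|u_n|^p\big)^\kappa\le c_1|\mathcal I_\mu*F(u_n)|^\kappa\mathscr F(u_n)$ by $(F_4)$, and only then use Lemma \ref{estf} in the form $|\mathcal I_\mu*F(u_n)|^\kappa\le K^{\kappa-1}|\mathcal I_\mu*F(u_n)|$, so that one power of the weight survives and the first H\"older factor is controlled by $\big(c_1K^{\kappa-1}(c+1)/\lambda\big)^{1/\kappa}$, i.e.\ precisely by \eqref{equ7}. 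With that substitution your scheme goes through (whether run on $\langle\mathcal J'_\lambda(u_n),u_n\rangle=o(1)$, after writing $f(u_n)u_n=p\theta\mathscr F(u_n)+\tfrac{p\theta}{2}F(u_n)$ to connect the pairing with the $F$-estimates, or, as the paper does, on the identity $\mathcal J_\lambda(u_n)\to c$ divided by $\mathscr M(\|u_n\|_W^p)$ as in \eqref{equ8}).

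Two further points. In the case $w\ne0$ you invoke $F\ge0$ to justify restricting the double integral defining $\Psi$; this is not a hypothesis ($(F_3)$ only gives $F(t)>0$ for $|t|$ large, and $F$ may be negative near $0$), so the cross terms could spoil the lower bound $\Psi(u_n)\gtrsim\|u_n\|_W^{2p\theta}$. Your parenthetical alternative — the blow-up of $F(u_n)$ on a bounded set of positive measure forces $\mathcal I_\mu*F(u_n)$ to blow up pointwise, contradicting Lemma \ref{estf} — is the more robust way to finish that case, and is close in spirit to the paper's Fatou argument \eqref{equ6a}. Finally, the strong convergence $w_n\to0$ in $L^{p}$ and $L^{p\kappa/(\kappa-1)}$ that you correctly identify as necessary is needed by the paper too, where it is simply asserted in \eqref{equ3}; this is a shared weakness of both arguments (the embedding is not compact under $(V)$ alone), not a defect specific to yours.
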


\begin{proof}
Suppose that  $\{u_n\}\subset W^{s,p}_V(\mathbb{R}^N)$ is a $(C)_c-$sequence for $\mathcal{J}_\lambda(u)$, that is.
$$
\mathcal{J}_\lambda(u_n)\to c,\qquad \|\mathcal{J}'_\lambda(u_n)\|_W(1+\|u_n\|_W)\to0,
$$
which shows that
\begin{align}\label{equ1}
c=\mathcal{J}_\lambda(u_n)+o(1),\qquad \langle\mathcal{J}'_\lambda(u_n),u_n\rangle=o(1)
\end{align}
where $o(1)\to0$ as $n\to\infty$. We now prove that $\{u_n\}$ is bounded in $W^{s,p}_V(\mathbb{R}^N)$. We argue by contradiction. Suppose that the sequence $\{u_n\}$ is unbounded in $W^{s,p}_V(\mathbb{R}^N)$, then we may assume that
\begin{align}\label{equ2}
\|u_n\|_W\to\infty,\ \ \mbox{as}\ n\to\infty.
\end{align}
Let $\omega_n(x)=\frac{u_n}{\|u_n\|_W}$, then $\omega_n\in W^{s,p}_V(\mathbb{R}^N)$ with $\|\omega_n\|_W=1$. Hence, up to a subsequence, still denoted by itself, there exists a function $\omega\in W^{s,p}_V(\mathbb{R}^N)$ such that
\begin{align}\label{equ3}
\omega_n(x)\to\omega(x)\quad \mbox{a.e.\ in}\ \mathbb{R}^N,\qquad \mbox{and}\ \
\omega_n(x)\to\omega(x)\quad \mbox{a.e.\ in}\ L^r(\mathbb{R}^N)
\end{align}
as $n\to\infty$,  for $p\leq r<\frac{Np}{N-ps}$.

Let $\Omega_1=\{x\in\mathbb{R}^N: \omega(x)\neq0\}$, then
\[
\lim\limits_{n\to\infty}\omega_n(x)=\lim\limits_{n\to\infty}\frac{u_n(x)}{\|u_n\|_W}=\omega(x)\neq 0\ \ \mbox{in}\ \Omega_1,
\]
and (\ref{equ2}) implies that
\begin{align}\label{equ4}
|u_n|\to\infty\ \ \mbox{a.e.\ in}\ \Omega_1.
\end{align}
So from the assumption $(F_3)$ and Lemma \ref{estf}, we have
\begin{align}\label{equ4a}
 \lim\limits_{n\to\infty}\frac{(\mathcal{I}_\mu\ast F(u_n(x)))F(u_n(x))}{|u_n(x)|^{p\theta}}|\omega_n(x)|^{p\theta}=\infty,\ \ \mbox{for\ a.e.}\ x\in\Omega_1.
\end{align}

Moreover, by ($F_3$), there exists $t_0>0$ such that
$$
 \frac{F(t)}{|t|^{p\theta}}>1,
$$
for all $|t|>t_0$. Since $F$ is continuous, then there exists $\mathcal{C}>0$ such that
$|F(t)|\leq \mathcal{C}$ for all $t\in[-t_0,t_0]$. Thus, we see that there is a constant $C_0$ such that for any
$t\in\mathbb{R}$, we have
$F(t)\geq C_0$, which show that  there is a constant $C$ such that
$$
\frac{(\mathcal{I}_\mu\ast F(u_n))F(u_n)- C}{\|u_n\|_W^{p\theta}}\geq0.
$$
This means that
\begin{align}\label{equ5}
\frac{(\mathcal{I}_\mu\ast F(u_n))F(u_n(x))}{|u_n(x)|^{p\theta}}|\omega_n(x)|^{p\theta}-\frac{ C}{\|u_n\|_W^{p\theta}}\geq0.
\end{align}
By (\ref{equ1}) we have that
\begin{align}\label{equ5b}
c=\mathcal{J}_\lambda(u_n)+o(1)
=\frac{1}{p}\mathscr{M}(\|u_n\|_W^p)-  \frac{\lambda }{2}\int_{\mathbb{R}^{N}}(\mathcal{I}_\mu\ast F(u_n))F(u_n)dx +o(1).
\end{align}
Using this and $(M_1)-(M_2)$, we find
\begin{align}\label{equ6}
\frac{1}{2}\int_{\mathbb{R}^{N}}(\mathcal{I}_\mu\ast F(u_n))F(u_n)dx
=&\frac{1}{p\lambda}\mathscr{M}(\|u_n\|_W^p)-  \frac{c}{\lambda}+\frac{o(1)}{\lambda}\nonumber\\
\geq &\frac{m_0}{p\theta\lambda} \|u_n\|_W^{p}-  \frac{c}{\lambda}+\frac{o(1)}{\lambda}\nonumber\\
\to&\infty,\quad\mbox{as}\ \ n\to\infty.
\end{align}
We claim that $\mbox{meas}(\Omega_1)=0$. Indeed, if $\mbox{meas}(\Omega_1)\neq 0$.
from (\ref{equ0}), (\ref{equ4a}), (\ref{equ5}), (\ref{equ5b}) and Fatou's lemma, we have
\begin{align}\label{equ6a}
+\infty=&
\int_{\Omega_1}\liminf\limits_{n\to\infty}\frac{(\mathcal{I}_\mu\ast F(u_n(x)))F(u_n(x))}{|u_n(x)|^{p\theta}}|\omega_n(x)|^{p\theta}dx
-  \int_{\Omega_1}\limsup\limits_{n\to\infty}\frac{ C }{ \|u_n\|_W^{p\theta} }dx\nonumber\\
\leq&\int_{\Omega_1}\liminf\limits_{n\to\infty}\left(\frac{(\mathcal{I}_\mu\ast F(u_n(x)))F(u_n(x))}{|u_n(x)|^{p\theta}}|\omega_n(x)|^{p\theta}
-  \frac{ C }{ \|u_n\|_W^{p\theta} }\right)dx\nonumber\\
\leq&\liminf\limits_{n\to\infty}\int_{\Omega_1}\left(\frac{(\mathcal{I}_\mu\ast F(u_n(x)))F(u_n(x))}{|u_n(x)|^{p\theta}}|\omega_n(x)|^{p\theta}
-  \frac{ C }{ \|u_n\|_W^{p\theta} }\right)dx\nonumber\\
= &\liminf\limits_{n\to\infty}\int_{\Omega_1}\left(\frac{(\mathcal{I}_\mu\ast F(u_n))F(u_n) }{  \|u_n\|_W^{p\theta} }
-  \frac{ C }{ \|u_n\|_W^{p\theta} }\right)dx\nonumber\\
\leq & \liminf\limits_{n\to\infty}\int_{\Omega_1} \frac{\mathscr{M}(1)(\mathcal{I}_\mu\ast F(u_n))F(u_n) }{  \mathscr{M}(\|u_n\|_W^{p}) }dx
- \liminf\limits_{n\to\infty} \int_{\Omega_1}\frac{C }{ \|u_n\|_W^{p\theta} }dx\nonumber\\
\leq & \liminf\limits_{n\to\infty}\int_{\mathbb{R}^N} \frac{\mathscr{M}(1)(\mathcal{I}_\mu\ast F(u_n))F(u_n) }{  \mathscr{M}(\|u_n\|_W^{p}) }dx
- \liminf\limits_{n\to\infty} \int_{\Omega_1}\frac{C }{ \|u_n\|_W^{p\theta} }dx\nonumber\\
= &\frac{ \mathscr{M}(1)}{p}\liminf\limits_{n\to\infty}\int_{\mathbb{R}^N} \frac{(\mathcal{I}_\mu\ast F(u_n))F(u_n) }{ \frac{1}{p} \mathscr{M}(\|u_n\|_W^{p}) }dx\nonumber\\
= &\frac{ \mathscr{M}(1)}{p}\liminf\limits_{n\to\infty} \frac{\int_{\mathbb{R}^N}(\mathcal{I}_\mu\ast F(u_n))F(u_n) dx}{ \frac{\lambda}{2}\int_{\mathbb{R}^{N}}(\mathcal{I}_\mu\ast F(u_n))F(u_n)dx+c-o(1) }.
\end{align}
So by (\ref{equ6}) and (\ref{equ6a}), we get
$$
+\infty\leq \frac{2 \mathscr{M}(1)}{p\lambda}.
$$
This is a contradiction. This shows that $\mbox{meas}(\Omega_1)=0$. Hence $\omega(x)=0$ for almost all $x\in\mathbb{R}^N$. The convergence in (\ref{equ3}) means that
\begin{align}\label{equ3a}
\omega_n(x)\to 0\quad \mbox{a.e.\ in}\ \mathbb{R}^N,\quad \mbox{and}\ \
\omega_n(x)\to0\quad \mbox{a.e.\ in}\ L^r(\mathbb{R}^N)\ \ \ \mbox{as}\ n\to\infty,
\end{align}
for $p\leq r< \frac{Np}{N-ps}$.

Using (\ref{equ1}) and $(M_2)$, we get
\begin{align}\label{equ7}
c+1\geq&\mathcal{J}_\lambda(u_n)-\frac{1}{p\theta}\langle\mathcal{J}'_\lambda(u_n),u_n\rangle\nonumber\\
=&\frac{1}{p}\mathscr{M}(\|u_n\|_W^p)-   \frac{1}{p\theta}M(\|u_n\|_W^p)\|u_n\|_W^p\nonumber\\
&+\lambda \int_{\mathbb{R}^{N}}(\mathcal{I}_\mu\ast F(u_n))\left(\frac{1}{p\theta}f(u_n)u_n-\frac{1}{2}F(u_n)\right)dx\nonumber\\
\geq&\lambda \int_{\mathbb{R}^{N}}(\mathcal{I}_\mu\ast F(u_n))\left(\frac{1}{p\theta}f(u_n)u_n-\frac{1}{2}F(u_n)\right)dx\nonumber\\
=&\lambda \int_{\mathbb{R}^{N}}(\mathcal{I}_\mu\ast F(u_n))\mathscr{F}(u_n)dx,
\end{align}
for $n$ large enough.

Let us define $\Omega_n(a,b):=\{x\in\mathbb{R}^N:a\leq |u_n(x)|\leq b\}$ for $a,b\geq 0$. From $(M_1)$ and $(M_2)$, we have that
\begin{align}\label{equ8fgh}
\mathscr{M}(\|u_n\|_W^p)\geq\frac{1}{\theta}M(\|u_n\|_W^p) \|u_n\|_W^p \geq\frac{m_0}{\theta}  \|u_n\|_W^p.
\end{align}
This together with (\ref{equ2}) and (\ref{equ5b}) yields that
\begin{align}\label{equ8}
0<\frac{2}{p\lambda}\leq&
\limsup\limits_{n\to\infty}\frac{\int_{\mathbb{R}^{N}}(\mathcal{I}_\mu\ast F(u_n))F(u_n)dx} {\mathscr{M}(\|u_n\|_W^p)}\nonumber\\
=&\limsup\limits_{n\to\infty}\int_{\mathbb{R}^N}\frac{(\mathcal{I}_\mu\ast F(u_n))F(u_n)} {\mathscr{M}(\|u_n\|_W^p)}dx\nonumber\\
=&\limsup\limits_{n\to\infty}\left(\int_{\Omega_n(0,r_0)}+\int_{\Omega_n(r_0,\infty)}\right) \frac{(\mathcal{I}_\mu\ast F(u_n))F(u_n)} {\mathscr{M}(\|u_n\|_W^p)}dx.
\end{align}
On the one hand, by Lemma \ref{estf}, (\ref{equ8fgh}),
$(F_2)$ and (\ref{equ3a}), we obtain
\begin{align}\label{equ8a}
&\int_{\Omega_n(0,r_0)} \frac{(\mathcal{I}_\mu\ast F(u_n))F(u_n)} {\mathscr{M}(\|u_n\|_W^p)}dx\nonumber\\
\leq& \frac{K\theta}{m_0}\int_{\Omega_n(0,r_0)} \frac{ |F(u_n)|} { \|u_n\|_W^p }dx\nonumber\\
\leq& \frac{c_0K\theta}{m_0}\int_{\Omega_n(0,r_0)} \left(\frac{ |u_n|^{q_1}} {q_1 \|u_n\|_W^p }+\frac{ |u_n|^{q_2}} {q_2 \|u_n\|_W^p }\right)dx\nonumber\\
=& \frac{c_0K\theta}{m_0}\int_{\Omega_n(0,r_0)} \left(\frac{ |u_n|^{q_1-p}} {q_1 }|\omega_n|^p+\frac{ |u_n|^{q_2-p}} {q_2 }|\omega_n|^p\right)dx\nonumber\\
\leq & \frac{c_0K\theta}{m_0} \left(\frac{ r_0^{q_1-p}} {q_1 } +\frac{ r_0^{q_2-p}} {q_2 }\right)\int_{\Omega_n(0,r_0)}|\omega_n|^pdx\to0,\ \ \mbox{as}\ n\to\infty.
\end{align}
On the other hand, using H\"{o}lder inequality,  (\ref{equ3a}), (\ref{equ7}) and $(F_4)$, we find
\begin{align}\label{equ8b}
& \int_{\Omega_n(r_0,\infty)}  \frac{|\mathcal{I}_\mu\ast F(u_n)|F(u_n)} {\mathscr{M}(\|u_n\|_W^p)}dx\nonumber\\
 \leq &\frac{\theta}{m_0}\int_{\Omega_n(r_0,\infty)}  \frac{|\mathcal{I}_\mu\ast F(u_n)|F(u_n)} { \|u_n\|_W^p}dx\nonumber\\
= &\frac{\theta}{m_0}\int_{\Omega_n(r_0,\infty)}  \frac{|\mathcal{I}_\mu\ast F(u_n)|F(u_n)} { |u_n|^p}|\omega_n(x)|^pdx\nonumber\\
\leq&\frac{\theta}{m_0}\left(\int_{\Omega_n(r_0,\infty)}  \left(\frac{|\mathcal{I}_\mu\ast F(u_n)|F(u_n)} { |u_n|^p}\right)^\kappa dx\right)^{\frac{1}{\kappa}}
\left(\int_{\Omega_n(r_0,\infty)}   |\omega_n(x)|^{\frac{\kappa p}{\kappa-1}}dx\right)^{\frac{\kappa-1}{\kappa}}\nonumber\\
\leq&\frac{\theta}{m_0}c_1^{\frac{1}{\kappa}}\left(\int_{\Omega_n(r_0,\infty)}  |\mathcal{I}_\mu\ast F(u_n)|^\kappa\mathscr{F}(u_n) dx\right)^{\frac{1}{\kappa}}
\left(\int_{\Omega_n(r_0,\infty)}   |\omega_n(x)|^{\frac{\kappa p}{\kappa-1}}dx\right)^{\frac{\kappa-1}{\kappa}}\nonumber\\
\leq&\frac{\theta}{m_0}c_1^{\frac{1}{\kappa}}K^{\frac{\kappa-1}{\kappa}}\left(\int_{\Omega_n(r_0,\infty)}  |\mathcal{I}_\mu\ast F(u_n)| \mathscr{F}(u_n) dx\right)^{\frac{1}{\kappa}}
\left(\int_{\Omega_n(r_0,\infty)}   |\omega_n(x)|^{\frac{\kappa p}{\kappa-1}}dx\right)^{\frac{\kappa-1}{\kappa}}\nonumber\\
\leq&\frac{\theta}{m_0}c_1^{\frac{1}{\kappa}}K^{\frac{\kappa-1}{\kappa}}\left(\frac{c+1}{\lambda}\right)^{\frac{1}{\kappa}}
\left(\int_{\Omega_n(r_0,\infty)}   |\omega_n(x)|^{\frac{\kappa p}{\kappa-1}}dx\right)^{\frac{\kappa-1}{\kappa}}\to0,\ \   \mbox{as}\ n\to\infty.
\end{align}
Here we used the fact that $\frac{\kappa p}{\kappa-1}\in (p,\frac{Np}{N-ps})$ if $\kappa>\frac{N}{ps}$.
Thus, we get a contradiction from (\ref{equ8})-(\ref{equ8b}). The proof is complete.
\end{proof}

\begin{lemma}\label{mage2}
Assume that $(V)$, $(M_1)-(M_2)$ and $(F_1)-(F_4)$ hold. Then the functional $\mathcal{J}_\lambda$ satisfies $(C)_c-$condition for any $\lambda>0$.
\end{lemma}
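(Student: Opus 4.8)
The plan is to combine the boundedness already granted by Lemma \ref{mage1} with the weak-to-strong continuity of $\Psi'$ from Lemma \ref{lema2a} and a monotonicity (Simon-type) inequality for the fractional $p$-Laplacian. Let $\{u_n\}$ be a $(C)_c$-sequence. By Lemma \ref{mage1} it is bounded in $W^{s,p}_V(\mathbb{R}^N)$; since this space is uniformly convex, hence reflexive, I may pass to a subsequence with $u_n\rightharpoonup u$ in $W^{s,p}_V(\mathbb{R}^N)$. The whole task is to upgrade this to $u_n\to u$ strongly.

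First I would test the gradient against $u_n-u$. Writing $\mathcal{J}_\lambda'=\Phi'-\lambda\Psi'$ and using that $\|\mathcal{J}_\lambda'(u_n)\|\to0$ (a consequence of the $(C)_c$ hypothesis) together with the boundedness of $\{u_n-u\}$ yields $\langle\mathcal{J}_\lambda'(u_n),u_n-u\rangle\to0$. By Lemma \ref{lema2a}, $\Psi'(u_n)\to\Psi'(u)$ strongly in the dual, so splitting $\langle\Psi'(u_n),u_n-u\rangle=\langle\Psi'(u_n)-\Psi'(u),u_n-u\rangle+\langle\Psi'(u),u_n-u\rangle$ shows both terms vanish, hence $\langle\Psi'(u_n),u_n-u\rangle\to0$. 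Consequently $\langle\Phi'(u_n),u_n-u\rangle\to0$. By Lemma \ref{lema1}, $\langle\Phi'(u_n),u_n-u\rangle=M(\|u_n\|_W^p)\,A_n$, where $A_n$ denotes the bracketed $p$-Laplacian-plus-potential pairing of $u_n$ against $u_n-u$. Since $\{u_n\}$ is bounded and $M$ is continuous, $M(\|u_n\|_W^p)$ stays in a compact subinterval $[m_0,C]$ of $(0,\infty)$ by $(M_1)$; dividing gives $A_n\to0$.

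Next, denote by $\mathcal{L}(u)$ the fixed bounded linear functional $\varphi\mapsto\langle u,\varphi\rangle_{s,p}+\int_{\mathbb{R}^N}V|u|^{p-2}u\varphi\,dx$. Because $u_n\rightharpoonup u$, one has $\langle\mathcal{L}(u),u_n-u\rangle\to0$, so that $A_n-\langle\mathcal{L}(u),u_n-u\rangle=\langle\mathcal{L}(u_n)-\mathcal{L}(u),u_n-u\rangle\to0$. This quantity is exactly the sum, over the Gagliardo kernel and over the weighted $L^p$ term, of the monotone differences $(|a|^{p-2}a-|b|^{p-2}b)(a-b)$. I would then apply the Simon inequalities pointwise. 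For $p\ge2$ one has $\langle\mathcal{L}(u_n)-\mathcal{L}(u),u_n-u\rangle\ge 2^{2-p}\|u_n-u\|_W^p$, which forces $\|u_n-u\|_W\to0$ directly. For $1<p<2$ one instead uses the degenerate bound with factor $(p-1)|\xi-\eta|^2(|\xi|+|\eta|)^{p-2}$, combined with Hölder's inequality with exponents $2/p$ and $2/(2-p)$ and the boundedness of $\{u_n\}$, to recover $\|u_n-u\|_W\to0$. In either case $u_n\to u$ strongly, giving the $(C)_c$-condition.

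I expect the main obstacle to be the subquadratic range $1<p<2$: there the fractional $p$-Laplacian is only degenerately monotone, so the passage from $\langle\mathcal{L}(u_n)-\mathcal{L}(u),u_n-u\rangle\to0$ to $\|u_n-u\|_W\to0$ cannot be read off a coercive bound and must go through the interpolation step above. The Kirchhoff factor $M$, by contrast, is harmless precisely because $(M_1)$ keeps it bounded away from $0$, which is what allowed me to divide it out and reduce to the pure $p$-Laplacian estimate.
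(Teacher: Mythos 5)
Your proposal is correct and follows essentially the same route as the paper: boundedness from Lemma \ref{mage1}, weak convergence of a subsequence, testing against $u_n-u$ with the weak-to-strong continuity of $\Psi'$ (Lemma \ref{lema2a}), reduction to the monotone pairing $\langle\mathcal{L}(u_n)-\mathcal{L}(u),u_n-u\rangle\to0$, and the Simon inequalities with the two cases $p\geq2$ and $1<p<2$. The only difference is bookkeeping: you divide out the Kirchhoff factor $M(\|u_n\|_W^p)$ (using $(M_1)$) before subtracting the fixed functional $\mathcal{L}(u)$, whereas the paper expands $\langle\mathcal{J}'_\lambda(u_n)-\mathcal{J}'_\lambda(u),u_n-u\rangle$ into five terms and handles the differences $M(\|u_n\|_W^p)-M(\|u\|_W^p)$ explicitly; both reach the same monotonicity estimate.
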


\begin{proof}
Suppose that  $\{u_n\}\subset W^{s,p}_V(\mathbb{R}^N)$ is a $(C)_c-$sequence for $\mathcal{J}_\lambda(u)$, from Lemma \ref{mage1}, we have that $\{u_n\}$ is bounded in $W^{s,p}_V(\mathbb{R}^N)$, then if necessary to a subsequence, we have
\begin{align}\label{cc12a}
&u_n\rightharpoonup u \ \ \mbox{in}\ W^{s,p}_V(\mathbb{R}^N),\quad u_n\to u\ \ \mbox{a.e.\ in}\ \mathbb{R}^N,\nonumber\\
&u_n\to u\ \ \mbox{in} \ L^{q_1}(\mathbb{R}^N)\cap L^{q_2}(\mathbb{R}^N),\\
&|u_n|\leq\ell(x)\ \ \mbox{a.e.\ in}\ \mathbb{R}^N,\ \ \mbox{for\ some}\ \ell(x)\in L^{q_1}(\mathbb{R}^N)\cap L^{q_2}(\mathbb{R}^N).\nonumber
\end{align}
For simplicity, let $\varphi\in W_V^{s,p}(\mathbb{R}^N)$ be fixed and denote by $B_\varphi$ the linear functional on
$W_V^{s,p}(\mathbb{R}^N)$ defined by
\begin{align*}
B_\varphi(v)&=\iint_{\mathbb{R}^{2N}}\frac{|\varphi(x)-\varphi(y)|^{p-2}(\varphi(x)-\varphi(y))
}{|x-y|^{N+ps}}(v(x)-v(y))\,dx\,dy.
\end{align*}
for all $v\in W_V^{s,p}(\mathbb{R}^N)$.
By H\"older inequality, we have
\[
\left|B_\varphi(v)\right|\leq
[\varphi]_{s,p}^{p-1}[v]_{s,p}  \leq
\|\varphi\|_W^{p-1}\|v\|_W,
\]
for all $v\in W_V^{s,p}(\mathbb{R}^N)$.
Hence, (\ref{cc12a}) gives that
\begin{align}\label{cc12b}
\lim\limits_{n\to\infty}\Big(M(\|u_n\|_W^p)-M(\|u\|_W^p)\Big)B_u(u_n-u)=0,
\end{align}
since $\Big\{M(\|u_n\|_W^p)-M(\|u\|_W^p)\Big\}_n$ is bounded in $\mathbb{R}$.

Since $\mathcal{J}'_\lambda(u_n)\to0$ in $(W^{s,p}_V(\mathbb{R}^N))'$ and $u_n\rightharpoonup u$ in  $W^{s,p}_V(\mathbb{R}^N)$, we have
$$
\langle \mathcal{J}'_\lambda(u_n)-\mathcal{J}'_\lambda(u),u_n-u\rangle\to0\ \ \ \mbox{as}\ n\to\infty.
$$
That is,
\begin{align}\label{erq3}
o(1)=&\langle \mathcal{J}'_\lambda(u_n)-\mathcal{J}'_\lambda(u),u_n-u\rangle\nonumber\\
=&M(\|u_n\|_W^p)\Big(B_{u_n}(u_n-u)+\int_{\mathbb{R}^N}V(x)|u_n|^{p-2}u_n(u_n-u)dx\Big)\nonumber\\
&-M(\|u\|_W^p)\Big(B_{u}(u_n-u)+\int_{\mathbb{R}^N}V(x)|u|^{p-2}u(u_n-u)dx\Big)\nonumber\\
&-\lambda\int_{\mathbb{R}^N}\Big[(\mathcal{I}_\mu\ast F(u_n))f(u_n)-(\mathcal{I}_\mu\ast F(u))f(u)\Big](u_n-u)dx\nonumber\\
=&M(\|u_n\|_W^p) \Big[B_{u_n}(u_n-u)- B_{u}(u_n-u)\Big] \nonumber\\
&+\Big(M(\|u_n\|_W^p)-M(\|u\|_W^p)\Big) B_{u}(u_n-u)\nonumber\\
&+M(\|u_n\|_W^p) \int_{\mathbb{R}^N}V(x)(|u_n|^{p-2}u_n-|u|^{p-2}u)(u_n-u)dx\nonumber\\
&+[M(\|u_n\|_W^p)-M(\|u\|_W^p)] \int_{\mathbb{R}^N}V(x)|u|^{p-2}u(u_n-u)dx\nonumber\\
&-\lambda\int_{\mathbb{R}^N}\Big[(\mathcal{I}_\mu\ast F(u_n))f(u_n)-(\mathcal{I}_\mu\ast F(u))f(u)\Big](u_n-u)dx.
\end{align}
From Lemma \ref{lema2a}, we have
\begin{align}\label{cc12}
\int_{\mathbb{R}^N}\Big[(\mathcal{I}_\mu\ast F(u_n))f(u_n)-(\mathcal{I}_\mu\ast F(u))f(u)\Big](u_n-u)dx\to0,\quad \mbox{as}\ n\to\infty.
\end{align}
Moreover, using H\"{o}lder inequality and (\ref{cc12a}), we have
\begin{align}\label{erq3yu}
&[M(\|u_n\|_W^p)-M(\|u\|_W^p)] \int_{\mathbb{R}^N}V(x)|u|^{p-2}u(u_n-u)dx\to0,\quad \mbox{as}\ n\to\infty.
\end{align}
From (\ref{cc12b})-(\ref{erq3yu}) and $(M_1)$, we obtain
\begin{align*}
\lim\limits_{n\to\infty}M(\|u_n\|_W^p)\left(\Big[B_{u_n}(u_n-u)- B_{u}(u_n-u)\Big]+\int_{\mathbb{R}^N}V(x)(|u_n|^{p-2}u_n-|u|^{p-2}u)(u_n-u)dx\right)=0.
\end{align*}
Since $M(\|u_n\|_W^p)[ B_{u_n}(u_n-u)- B_{u}(u_n-u)] \geq0$ and $V(x)(|u_n|^{p-2}u_n-|u|^{p-2}u)(u_n-u)\geq0$ for all $n$ by convexity, $(M_1)$ and $(V_1)$,
we have
\begin{align}\label{minti1}
\lim\limits_{n\to\infty} \Big[B_{u_n}(u_n-u)- B_{u}(u_n-u)\Big]=0,\nonumber\\
\lim\limits_{n\to\infty} \int_{\mathbb{R}^N}V(x)(|u_n|^{p-2}u_n-|u|^{p-2}u)(u_n-u)dx=0.
\end{align}
Let us now recall the well-known Simon inequalities. There
exist positive numbers $c_p$ and $C_p$, depending only
on $p$, such that
\begin{equation} \label{simon}
|\xi-\eta|^{p}
\leq  \begin{cases}
 c_p (|\xi|^{p-2}\xi-|\eta|^{p-2}\eta)(\xi-\eta)
& \text{for } p\geq2,\\[3pt]
C_p\big[(|\xi|^{p-2}\xi-|\eta|^{p-2}\eta)  (\xi-\eta) \big]^{p/2}(|\xi|^p+|\eta|^p)^{(2-p)/2}
& \text{for }1<p<2,
\end{cases}
\end{equation}
for all $\xi,\eta\in\mathbb{R}^N$. According to the Simon
inequality, we divide the discussion into two cases.

{\it Case $p\geq 2$:} From (\ref{minti1}) and (\ref{simon}), as $n\to\infty$, we have
\begin{align*}
[u_n-u]_{s,p}^p=&\iint_{\mathbb{R}^{2N}}\frac{|u_n(x)-u(x)-u_n(y)+u(y)|^p}{|x-y|^{N+ps}}dxdy\\
\leq &c_p\iint_{\mathbb{R}^{2N}}\frac{|u_n(x) -u_n(y)|^{p-2}(u_n(x) -u_n(y))-|u(x) -u(y)|^{p-2}(u(x) -u(y))}{|x-y|^{N+ps}}\nonumber\\
&\qquad \times \Big(u_n(x)-u(x)-u_n(y)+u(y)\Big)dxdy\\
=&c_p\Big[B_{u_n}(u_n-u)- B_{u}(u_n-u)\Big]=o(1),
\end{align*}
and
\begin{align*}
\|u_n-u\|_{p,V}^p\leq c_p\int_{\mathbb{R}^N}V(x)(|u_n|^{p-2}u_n-|u|^{p-2}u)(u_n-u)dx=o(1).
\end{align*}
Consequently, $\|u_n-u\|_W\to0$ as $n\to\infty$.

{\it Case $1<p<2$:} taking $\xi=u_n(x)-u_n(y)$ and $\eta=u(x)-u(y)$ in  (\ref{simon}), as $n\to\infty$, we have
\begin{align*}
[u_n-u]_{s,p}^p
\leq& C_p\big[B_{u_n}(u_n-u)-B_{u}(u_n-u) \big]^{p/2}([u_n]_{s,p}^p+[u]_{s,p}^p)^{(2-p)/2}\\
\leq& C_p\big[B_{u_n}(u_n-u)-B_{u}(u_n-u) \big]^{p/2}([u_n]_{s,p}^{p(2-p)/2}+[u]_{s,p}^{p(2-p)/2})\\
\leq& C\big[B_{u_n}(u_n-u)-B_{u}(u_n-u) \big]^{p/2}=o(1).
\end{align*}
Here we used the fact that $[u_n]_{s,p}$ and $[u]_{s,p}$ are bounded, and the elementary inequality
$$
(a+b)^{(2-p)/2}\leq a^{(2-p)/2}+b^{(2-p)/2}\ \ \mbox{for\ all}\ a,b\geq 0\ \mbox{and}\ 1<p<2.
$$
Moreover, by H\"{o}lder inequality and (\ref{minti1}), as $n\to\infty$,
\begin{align*}
\|u_n-u\|_{p,V}^p\leq &C_p\int_{\mathbb{R}^N}V(x) \big[(|u_n|^{p-2}u_n-|u|^{p-2}u)  (u_n-u) \big]^{p/2}(|u_n|^p+|u|^p)^{(2-p)/2}dx\\
\leq&C_p\left(\int_{\mathbb{R}^N}V(x)  (|u_n|^{p-2}u_n-|u|^{p-2}u)  (u_n-u) dx\right)^{p/2}\\
&\qquad \times\left(\int_{\mathbb{R}^N}V(x)(|u_n|^p+|u|^p)dx\right)^{(2-p)/2}\\
\leq&C_p\left(\|u_n\|_{p,V}^{p(2-p)/2}+\|u\|_{p,V}^{p(2-p)/2}\right)\left(\int_{\mathbb{R}^N}V(x)  (|u_n|^{p-2}u_n-|u|^{p-2}u)  (u_n-u) dx\right)^{p/2}\\
\leq&C \left(\int_{\mathbb{R}^N}V(x)  (|u_n|^{p-2}u_n-|u|^{p-2}u)  (u_n-u) dx\right)^{p/2}\to0.
\end{align*}
Thus $\|u_n-u\|_W\to0$ as $n\to\infty$. The proof is complete.
\end{proof}

Now we are ready to prove our main result.

\noindent
{\it Proof of Theorem \ref{thm1.1}:} By Lemmas \ref{mage}-\ref{mage2} and using Lemma \ref{cmainth}, we obtain that there exists a critical point of functional $\mathcal{J}_\lambda$, so problem \eqref{e1.1} has a nontrivial weak solution for any $\lambda>0$.

\subsection*{Acknowledgments}

The author was supported by National Nature Science Foundation of China (No. 11501468) and Chongqing Research Program of Basic Research and Frontier Technology cstc2018jcyjAX0196.

\end{document}